\DeclareMathAlphabet{\mathbbe}{U}{bbold}{m}{n}
\setlist{}
\newtheorem{thm}{Theorem}[section]
\newtheorem{lem}[thm]{Lemma}
\newtheorem{prop}[thm]{Proposition}
\newtheorem{cor}[thm]{Corollary}
\theoremstyle{definition}
\newtheorem{defn}[thm]{Definition}
\newtheorem{ex}[thm]{Example}
\theoremstyle{remark}
\newtheorem{rmk}[thm]{Remark}
\newtheorem{exc}[thm]{Exercise}
\newtheorem{dig}[thm]{Digression}
\newcommand{\op}{\mathrm{op}}
\newcommand{\id}{\mathrm{id}}
\newcommand{\ob}{\mathrm{ob}}
\newcommand{\cat}[1]{\textup{\textsf{#1}}}% for categories
\newcommand{\fun}[1]{\textup{#1}}%for functors
\newcommand{\KK}{\mathbb{K}}
\newcommand{\NN}{\mathbb{N}}
\newcommand{\eS}{\mathbb{S}}
\newcommand{\cA}{\mathsf{A}}
\newcommand{\cB}{\mathsf{B}}
\newcommand{\cC}{\mathsf{C}}
\newcommand{\cD}{\mathsf{D}}
\newcommand{\cS}{\mathsf{S}}
\newcommand{\DDelta}{\mathbbe{\Delta}}
\newcommand{\gC}{\mathfrak{C}}
\newcommand{\gN}{\mathfrak{N}}
\newcommand{\Space}{\mathbb{S}\cat{pace}}
\newcommand{\qCat}{\mathbb{Q}\cat{cat}}
\newcommand{\Mnd}{\mathbb{M}\cat{nd}}
\newcommand{\Adj}{\mathbb{A}\cat{dj}}
\newcommand{\Cat}{\mathbb{C}\cat{at}}
\newcommand{\hSpace}{\cat{h}\Space}
\newcommand{\sSet}{\cat{s}\mathbb{S}\cat{et}}
\newcommand{\Map}{\mathrm{Map}}
\newcommand{\Fun}{\mathrm{Fun}}
\newcommand{\Ho}{\cat{Ho}}
\newcommand{\sk}{\mathrm{sk}}
\title{Homotopy coherent structures}
\author{Emily Riehl}
\date{July 12-14, 2017; revised \today} %Floer homology and homotopy theory summer school at UCLA}
\thanks{These lecture notes were prepared to accompany a three-hour mini course entitled ``Homotopy coherent structures'' delivered at the summer school accompanying the ``Floer homology and homotopy theory'' conference at UCLA supported by NSF Grant DMS-1563615. I am grateful to the organizers for this opportunity to speak,  to the members of the Australian Category Seminar who suffered through a preliminary version of these talks, and also for additional support from the National Science Foundation provided by the grant DMS-1551129 while these notes were being written. An anonymous referee made several suggestions that improved the exposition.
}
\address{Department of Mathematics\\Johns Hopkins University \\ 3400 N Charles Street \\ Baltimore, MD 21218}
\keywords{homotopy coherent}
\begin{document}

\maketitle

\begin{abstract}
Naturally occurring diagrams in algebraic topology are commutative up to homotopy, but not on the nose. It was quickly realized that very little can be done with this information. Homotopy coherent category theory arose out of a desire to catalog the higher homotopical information required to restore constructibility (or more precisely, functoriality) in such ``up to homotopy'' settings. These notes provide a three-part introduction to homotopy coherent category theory. The first part surveys the classical theory of homotopy coherent diagrams of topological spaces. The second part introduces the homotopy coherent nerve and connects it to the free resolutions used to define homotopy coherent diagrams. This connection explains why diagrams valued in homotopy coherent nerves or more general $\infty$-categories are automatically homotopy coherent. The final part ventures into homotopy coherent algebra, connecting the newly discovered notion of homotopy coherent adjunction to the classical cobar and bar resolutions for homotopy coherent algebras.
\end{abstract}

\setcounter{tocdepth}{2}
\tableofcontents

\thirdleveltheorems

In algebraic topology and related settings it is common to encounter diagrams that do not commute on the nose but rather commute up to homotopy. Part \ref{part:I} of these notes opens with a motivating example. To perform further constructions with a homotopy commutative diagram, it is often necessary to make use of explicit homotopies that witness the homotopy commutativity together with higher-dimensional homotopies that encode various coherences between these witnesses. Together this data extends a \emph{homotopy commutative diagram} to a \emph{homotopy coherent diagram}, which is the primary example of a homotopy coherent structure.

Homotopy coherent category theory leads inexorably to the concept of \emph{quasi-categories}, first discovered by Boardman and Vogt when exploring the way in which homotopy coherent natural transformations compose. Traditionally,  homotopy coherent diagrams are valued in categories enriched over Kan complexes and indexed by simplicial categories constructed as free resolutions of ordinary 1-categories. In  Part \ref{part:II}, we observe that free resolutions are isomorphic to so-called \emph{homotopy coherent resolutions}, these being defined by the left adjoint to the more familiar homotopy coherent nerve functor. The homotopy coherent nerve takes a Kan complex enriched category to a quasi-category, and indeed, up to equivalence, all quasi-categories arise in this way. Thus, traditional homotopy coherent diagrams transpose to define maps of simplicial sets from the nerve of the indexing category into the quasi-category defined by the homotopy coherent. This observation connects the classical theory to a more modern setting for homotopy coherent category theory and explains the slogan that all diagrams valued in quasi-categories are homotopy coherent.

In Part \ref{part:III} we survey a small portion of homotopy coherent algebra by introducing homotopy coherent analogues of adjunctions, monads, and their algebras, notions which, somewhat surprisingly, can all be described using the simplicial categories formalism. We explain how these notions can be understood as part of ``model-independent'' $(\infty,1)$-category theory, applicable to more than just the quasi-categorical setting. This section concludes with suggestions for further reading that develop other topics in ``higher algebra'' that are not addressed here.

\section{Homotopy coherent diagrams}\label{part:I}

\subsection{Historical motivation}

If $X$ is a $G$-space, for $G$ a discrete group, and $Y$ is homotopy equivalent to $X$, then is $Y$ a $G$-space? The action of a group element $g \in G$ on $Y$ can be defined by transporting along the maps  $f \colon X \to Y$ and  $f^{-1} \colon Y \to X$ of the homotopy equivalence:
\[
\begin{tikzcd}
Y \arrow[r, "f^{-1}"] \arrow[d, dashed, "g_*"'] & X\arrow[d, "g_*"] \\ Y & X \arrow[l, "f"] 
\end{tikzcd}
\]
This defines a continuous endomorphism of $Y$ for every $g \in G$, as required by a $G$-action, but these maps are not necessarily automorphisms (since $f$ and $f^{-1}$ need not be homeomorphisms) nor is the composite of the actions associated to a pair of elements $g, h \in G$ equal to the action by their product: instead
\[ g_* \circ h_* = (f \circ g_* \circ f^{ -1}) \circ (f \circ h_* \circ f^{-1}) = f \circ g_* \circ (f^{-1} \circ f) \circ h_* \circ f^{-1} \] \[  \mathrm{and} \qquad (gh)_* = f \circ (gh)_* \circ f^{-1}\]
are homotopic via the homotopy $f^{-1} \circ f \simeq \id_X$.

So if $Y$ is not a $G$-space, then what is it? The main aim of Part \ref{part:I} is to develop language to describe this sort of situation. A $G$-space $X$ may be productively considered as a \emph{diagram} in the category of topological spaces indexed by a category $\cat{B}G$ with a single object and with an endomorphism corresponding to each element in the group.\footnote{Here ``diagram'' is synonymous with ``functor'': to define a functor whose domain is the category $\cat{B}G$ is to specify an image for the single object together with and endomorphism $g_*$ for each $g \in G$ so that $(gh)_*=g_*h_*$ and $e_* = \id$.}

By contrast the ``up to homotopy $G$-space'' $Y$ is instead a \emph{homotopy commutative diagram}. Modulo point-set topological considerations that we sweep under the rug using a technique that will be described in \S\ref{sec:shape}, the category of topological spaces is self-enriched, meaning that the set of continuous functions between any pair of spaces $X$ and $Y$ is itself a space, which we denote by $\Map(X,Y)$. The points in $\Map(X,Y)$ are the continuous functions $f \colon X \to Y$ while a path in $\Map(X,Y)$ between points $f$ and $g$ is a \textbf{homotopy}. Two parallel maps $f,g \colon X \to Y$ are \textbf{homotopic} --- in symbols ``$f \simeq g$'' --- just when they are in the same path component in $\pi_0\Map(X,Y)$. The space $\Map(X,Y)$ also has higher structure, the paths between paths, and the paths between paths between paths and so on, which define higher homotopies.

 Importantly, composition of continuous functions itself defines a continuous function between mapping spaces
\[ 
\begin{tikzcd}
\Map(Y,Z) \times \Map(X,Y) \arrow[r, "\circ"] & \Map(X,Z)
\end{tikzcd}
\] so this relation of taking homotopy classes of maps is preserved by pre- and post-composition with another continuous function. This defines the category $\hSpace$ of spaces and homotopy classes of continuous functions as a quotient of the enriched category $\Space$ of spaces and mapping spaces $\Map(X,Y)$.

%In these lectures, we will consider three categories of spaces:
%\begin{itemize}
%\item $\cat{Space}$, the category of spaces and continuous functions
%\item $\cat{hSpace}$, the category of spaces and homotopy classes of continuous functions
%\item $\Space$, the self-enriched category of spaces, the points of whose mapping spaces are continuous functions, the paths in which are homotopies, and the paths between paths in which are higher homotopies.
%\end{itemize}

\begin{defn}
If $\cA$ is an ordinary category then
\begin{itemize}
\item a \textbf{diagram of spaces} is just a functor $\cA \to \Space$
\item a \textbf{homotopy commutative diagram of spaces} is a functor $\cA \to \hSpace$
%\item a \textbf{homotopy coherent diagram of spaces} is a functor $?? \to \Space$ whose indexing category is something to be described
\end{itemize}
\end{defn}

Thus, the $G$-space $X$ defines a diagram $X \colon \cat{B}G \to \Space$ while the ``up-to-homotopy'' $G$-space $Y$ defines a homotopy commutative diagram $Y \colon \cat{B}G \to \hSpace$. This terminology suggests a related question: can every ``up-to-homotopy'' $G$-space be rectified to a homotopy equivalent $G$-space? Or more generally, when does a homotopy commutative diagram $F \colon \cA \to \hSpace$ admit a \textbf{rectification}, i.e., a strictly-commutative diagram $F' \colon \cA \to \Space$ together with homotopy equivalences $Fa \simeq F'a$  that define a natural transformation\footnote{See Definition \ref{defn:nat-trans}.} in $\hSpace$?

\begin{thm}[{\cite[2.5]{DKS-homotopy}}] A homotopy commutative diagram has a rectification if and only if it may be lifted to a homotopy coherent diagram. Moreover, equivalence classes of rectifications correspond bijectively with equivalences classes of homotopy coherent diagrams.
\end{thm}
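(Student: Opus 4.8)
The plan is to mediate between realizations and homotopy coherent lifts of a fixed homotopy commutative diagram $F \colon \cA \to \hSpace$ by means of the free resolution of the indexing category. I would first introduce the topologically enriched category $\gC\cA$, the ``homotopy coherent replacement'' of $\cA$: it has the same objects as $\cA$, and its mapping space $\gC\cA(a,b)$ is assembled from cubes indexed by strings of composable nonidentity morphisms from $a$ to $b$, with boundary faces recording composition. The upshot is that $\gC\cA(a,b)$ has exactly one contractible path component for each morphism $a \to b$ of $\cA$, so the augmentation $q \colon \gC\cA \to \cA$ (the identity on objects, collapsing each cube to the composite it records) is bijective on objects and a homotopy equivalence on every mapping space. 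By definition a \emph{homotopy coherent diagram} indexed by $\cA$ is a topologically enriched functor $\gC\cA \to \Space$; applying $\pi_0$ to its action on mapping spaces and using $\pi_0\gC\cA(a,b) = \cA(a,b)$ produces its underlying homotopy commutative diagram, so every homotopy coherent diagram lies over a well-defined $F \colon \cA \to \hSpace$.

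With this language the easy direction, ``a realization determines a homotopy coherent lift,'' is immediate. Given a realization $F' \colon \cA \to \Space$ of $F$, restriction along $q$ yields a homotopy coherent diagram $q^*F' \colon \gC\cA \to \Space$, and because $q$ carries each generating cube to a composite in $\cA$ the underlying homotopy commutative diagram of $q^*F'$ is exactly the image $[F']$ of $F'$ in $\hSpace$, which is $F$ by hypothesis. So $q^*$ sends realizations of $F$ to homotopy coherent lifts of $F$, and I would check that it respects the two equivalence relations, yielding a map from equivalence classes of realizations to equivalence classes of homotopy coherent lifts.

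The substance of the theorem lies in the converse and in bijectivity, which I would deduce from a single \textbf{rectification} statement: restriction along $q$ exhibits the homotopy theory of strict diagrams $\Space^\cA$ and the homotopy theory of homotopy coherent diagrams $\Space^{\gC\cA}$ as equivalent. Concretely, $q^*$ should be homotopically fully faithful and essentially surjective. Essential surjectivity says that any homotopy coherent diagram $\tilde F$ is connected by a natural homotopy equivalence to some $q^*F'$; unwinding definitions, the object values $F'a \simeq \tilde F a$ together with this equivalence exhibit $F'$ as a realization of the homotopy commutative diagram underlying $\tilde F$, providing the inverse construction on equivalence classes. Homotopical full faithfulness then shows the two assignments are mutually inverse at the level of equivalence classes, which is the asserted bijection; the biconditional follows formally, since a bijection of sets makes one side inhabited exactly when the other is.

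The main obstacle is the rectification step, and I would establish it in one of two equivalent ways. The structural route is to observe that $\gC\cA$ is cofibrant as a topologically enriched category --- its hom-spaces are built by attaching cubes --- so that $q$ is a cofibrant replacement and the adjunction $\Lan_q \dashv q^*$ becomes a Quillen equivalence between the projective model structures on $\Space^\cA$ and $\Space^{\gC\cA}$, whence $q^*$ induces the desired equivalence of homotopy categories. The hands-on route, closer to the original argument, is an induction that simultaneously builds $F'$ and the equivalence $q^*F' \simeq \tilde F$ one cube at a time, extending the already-constructed data across each newly attached cube using that the cubes are contractible and that homotopy equivalences in $\Space$ can be lifted and extended. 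Either way, the two points I would watch most carefully are the point-set topological hypotheses suppressed earlier --- guaranteeing that mapping spaces are well behaved, which I would secure by working with compactly generated spaces or by transporting the whole argument across the singular-complex/realization equivalence into simplicial sets --- and the bookkeeping identifying the path components of the space of realizations of $F$ with those of the space of homotopy coherent lifts of $F$, since it is this $\pi_0$-level comparison, rather than mere nonemptiness, that upgrades the existence statement to the claimed bijection.
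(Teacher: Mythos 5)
The paper does not actually prove this theorem: its ``proof'' consists entirely of deferring to \cite[2.4]{DKS-homotopy}, which establishes a \emph{space-level} statement --- that a suitably defined space of homotopy coherent lifts of $F$ and a classifying space of realizations of $F$ are weakly homotopy equivalent --- from which the existence claim and the $\pi_0$-level bijection both fall out. Your proposal, by contrast, is a genuine argument, and its engine is the right one: the augmentation $\epsilon \colon \gC\cA \to \cA$ is bijective on objects and a local homotopy equivalence (this is exactly the paper's Proposition I.2.9), and $\gC\cA$ is cofibrant as an enriched category (it is a simplicial computad, cf.\ Part \ref{part:II}), so restriction along $\epsilon$ is the right Quillen functor of a Quillen equivalence between projective model structures, by Dwyer--Kan. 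This is the modern repackaging of the mechanism underlying the DKS proof, so your route and the cited one are close cousins; what your version buys is a self-contained structural argument, while what DKS's version buys is precisely the space-level refinement discussed next. One small repair in your easy direction: the underlying homotopy commutative diagram of $q^*F'$ is $[F']$, which is only \emph{naturally isomorphic} to $F$ in $\hSpace$ via the equivalences $Fa \simeq F'a$, not equal to it; to produce a coherent lift of $F$ on the nose you must transport $q^*F'$ along these objectwise equivalences, which is exactly the Cordier--Porter result recorded as the first proposition of \S II.3 of this paper.

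The one point I would press you on is the passage from the equivalence $\Ho(\Space^\cA) \simeq \Ho(\Space^{\gC\cA})$ to the claimed bijection of equivalence classes \emph{over a fixed} $F$. An equivalence of homotopy categories gives a bijection on isomorphism classes globally, but an equivalence class of realizations of $F$ is a class of pairs $(F', \phi)$ where $\phi$ identifies $[F']$ with $F$ in $\hSpace$, and the identification data matters: a single $F'$ may admit inequivalent identifications, so the fiberwise count is not determined by isomorphism classes in $\Ho(\Space^\cA)$ alone. You flag this as ``bookkeeping,'' but it is more than that --- it is the entire content of \cite[2.4]{DKS-homotopy}, which compares the fibers of the map from the space of coherent diagrams to the set of homotopy commutative ones. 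The fix is available within your own framework: a Quillen equivalence induces a Dwyer--Kan equivalence of hammock localizations, hence weak equivalences of function complexes and of classification spaces, and \emph{that} space-level statement does restrict to each fiber over $F$ and yields the $\pi_0$ bijection. So your outline is sound, but the homotopy-category-level formulation of the rectification step, as written, is strictly too weak to deliver the ``moreover'' clause.
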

\begin{proof}
This result is proven as a corollary of \cite[2.4]{DKS-homotopy} which demonstrates that appropriately defined spaces of homotopy coherent diagrams and rectifications (defined slightly differently than above) are weak homotopy equivalent.
\end{proof}

In particular, since the homotopy commutative $G$-space $Y$ is homotopy equivalent to the strict $G$-space $X$, it must underlie a homotopy coherent diagram of shape $\cat{B}G$. Our task is now to work out what exactly the phrase \emph{homotopy coherent diagram} means.

\subsection{The shape of a homotopy coherent diagram}\label{sec:shape}

To build intuition for the general notion of a homotopy coherent diagram, it is helpful to consider a special case. To that end, let \[
\begin{tikzcd}
\bbomega := & 0 \arrow[r] & 1 \arrow[r] & 2 \arrow[r] & 3 \arrow[r] & \cdots
\end{tikzcd}
\]
denote the category whose objects are finite ordinals and with a unique morphism $j \to k$ if and only if $j \leq k$. 

An $\bbomega$-shaped graph in $\Space$ is comprised of spaces $X_k$ for each $k \in \bbomega$ together with morphisms $f_{j,k} \colon X_j \to X_k$ whenever $j < k$.\footnote{To simplify somewhat we adopt the convention that $f_{j,j}$ is the identity, making this data into a \textbf{reflexive directed graph} with implicitly designated identities.} This data defines a \emph{homotopy commutative diagram} $\bbomega \to \hSpace$  just when $f_{i,k} \simeq f_{j,k} \circ f_{i,j}$ whenever $i < j < k$.

To extend this data to a \emph{homotopy coherent diagram} $\bbomega \to \Space$ requires:
\begin{itemize}
\item Chosen homotopies $h_{i,j,k} \colon f_{i,k} \simeq f_{j,k} \circ f_{i,j}$ whenever $i < j < k$. This amounts to specifying a path in $\Map(X_i,X_k)$ from the vertex $f_{i,k}$ to the vertex $f_{j,k} \circ f_{i,j}$, which is obtained as the composite of the two vertices $f_{i,j} \in \Map(X_i,X_j)$ and $f_{j,k} \in \Map(X_j,X_k)$.
\item For $i < j < k < \ell$, the chosen homotopies provide four paths in $\Map(X_i,X_\ell)$
\[
\begin{tikzcd}[column sep=large]
f_{i,\ell} \arrow[r, no head, "h_{i,k,\ell}"] \arrow[d, no head, "h_{i,j,\ell}"'] & f_{k,\ell} \circ f_{i,k} \arrow[d, no head, "f_{k, \ell} \circ h_{i,j,k}"] \\ f_{j,\ell} \circ f_{i,j} \arrow[r, no head, "h_{j,k, \ell} \circ f_{i,j}"'] & f_{k,\ell} \circ f_{j,k} \circ f_{i,j}
\end{tikzcd} 
\] We then specify a higher homotopy --- a \emph{2-homotopy} --- filling in this square.
\item For $i < j < k < \ell < m$, the previous choices provide 12 paths and six 2-homotopies in $\Map(X_i,X_m)$ that assemble into the boundary of a cube. We then specify a \emph{3-homotopy}, a homotopy between homotopies between homotopies, filling in this cube.
\item Etc.
\end{itemize}

Even in this simple case of the category $\bbomega$, this data is a bit unwieldy. Our task is to define a category to index this homotopy coherent data arising from $\bbomega$: the objects $X_i$, the functions $X_i \to X_j$, the 1-homotopies $h_{i,j,k}$, the 2-homotopies, and so on. This data will assemble into a \emph{simplicial category} whose objects are the same as the objects of $\bbomega$ but which will have $n$-morphisms in each dimension $n \geq 0$, to index the $n$-homotopies. 

Because of the convenience of the mechanism of simplicial categories, and to avoid the point-set topology considerations alluded to above, we should now come clean and admit that we prefer to assume that our ``mapping spaces'' are really simplicial sets, or more precisely \emph{Kan complexes}. We defer the latter definition to the next section but give the others now.

\begin{dig}[a crash course on simplicial sets]\label{dig:sset}
There is a convenient category whose objects model topological spaces (at least up to weak homotopy type): the category $\cat{sSet}$ of simplicial sets. A \textbf{simplicial set} $X$ is a graded set $(X_n)_{n  \geq 0}$---with the elements of $X_n$ called \textbf{n-simplices}---together with maps 
\begin{equation}\label{eq:simp-set}
\begin{tikzcd}
X := &
X_0 \arrow[r, tail] & \arrow[l, shift left=0.75em, two heads] \arrow[l, shift right=0.75em, two heads]  X_1  \arrow[r, shift left=0.75em, tail] \arrow[r, shift right=0.75em, tail] &   \arrow[l, two heads] \arrow[l, two heads, shift right=1.5em] \arrow[l, two heads, shift left=1.5em]   X_2 \arrow[r, tail] \arrow[r, tail, shift right=1.5em] \arrow[r, tail, shift left=1.5em]   &\arrow[l, two heads, shift right=0.75em] \arrow[l, two heads, shift right=2.25em] \arrow[l, two heads, shift left=2.25em]  \arrow[l, two heads, shift left=0.75em] X_3  & \cdots
\end{tikzcd}
\end{equation}
that fulfill two functions: 
\begin{itemize}
\item The $n+1$ \textbf{face maps} $X_n \twoheadrightarrow X_{n-1}$ identify the faces of an $n$-simplex.
\item The $n$ \textbf{degeneracy maps} $X_n \rightarrowtail X_{n+1}$ define degenerate $n+1$-simplices that project onto a given $n$-simplex.
\end{itemize}
%that identify for each $n$-simplex all of its \textbf{faces} (lower-dimensional simplices) and \textbf{degeneracies} (higher dimensional simplices that are constant in some direction). 

There is a slick way to make all of this precise. Let $\DDelta$ denote the category of finite non-empty ordinals $[n]= \{0,1\ldots, n\}$ and order-preserving maps. The maps in the category $\DDelta$ are generated under composition by the basic inclusions and surjections displayed here:
\[
\begin{tikzcd}
\DDelta := &
{[0]} \arrow[r, shift left=0.75em, tail] \arrow[r, shift right=0.75em, tail] & {[1]} \arrow[l, two heads] \arrow[r, tail] \arrow[r, tail, shift right=1.5em] \arrow[r, tail, shift left=1.5em]  & {[2]} \arrow[l, shift left=0.75em, two heads] \arrow[l, shift right=0.75em, two heads]  \arrow[r, tail, shift right=0.75em] \arrow[r, tail, shift right=2.25em] \arrow[r, tail, shift left=2.25em]  \arrow[r, tail, shift left=0.75em] & {[3]} \arrow[l, two heads] \arrow[l, two heads, shift right=1.5em] \arrow[l, two heads, shift left=1.5em]   & \cdots
\end{tikzcd}
\]
Now a \textbf{simplicial set} is just a  functor $X \colon \DDelta^\op \to \cat{Set}$.

The category $\cat{sSet}$ is generated by the standard $n$-simplices $\Delta^n$ for each $n \geq 0$. The standard $n$-simplex may be thought of as an (ordered) $n$-simplex spanned by the vertices $0,\ldots, n$. More precisely, $\Delta^n$ is the functor represented by the object $[n] \in \DDelta$; that is an $m$-simplex in $\Delta^n$ is a map $\alpha \colon [m] \to [n]$ in $\DDelta$. There are various maps between these standard simplices 
\[
\begin{tikzcd}
\DDelta := &
\Delta^0 \arrow[r, shift left=0.75em, tail] \arrow[r, shift right=0.75em, tail] & \Delta^1 \arrow[l, two heads] \arrow[r, tail] \arrow[r, tail, shift right=1.5em] \arrow[r, tail, shift left=1.5em]  & \Delta^2 \arrow[l, shift left=0.75em, two heads] \arrow[l, shift right=0.75em, two heads]  \arrow[r, tail, shift right=0.75em] \arrow[r, tail, shift right=2.25em] \arrow[r, tail, shift left=2.25em]  \arrow[r, tail, shift left=0.75em] & \Delta^3 \arrow[l, two heads] \arrow[l, two heads, shift right=1.5em] \arrow[l, two heads, shift left=1.5em]   & \cdots
\end{tikzcd}
\]
each of the maps denoted by ``$\rightarrowtail$'' given by an ordered injection of the vertices and each of the maps denoted ``$\twoheadrightarrow$'' given by an ordered surjection of the vertices. By the Yoneda lemma, $\DDelta$ is isomorphic to the full subcategory of $\cat{sSet}$ spanned by the standard simplices; the functor $\Delta^\bullet \colon \DDelta \hookrightarrow \cat{sSet}$ sending $[n]$ to the standard $n$-simplex $\Delta^n$ is referred to as the \emph{Yoneda embedding}.

The standard simplices and maps between them generate the category $\cat{sSet}$ under gluing. That is, any simplicial set $X$ may be thought of as a triangulated space built by gluing in a standard simplex $\Delta^n$ for each $n$-simplex of $X$ along the face and degeneracy maps (\ref{eq:simp-set}). These intuitions can be formalized via a comparison adjunction 
    \begin{equation}\label{eq:singular-adjunction}
\begin{tikzcd}
\cat{sSet} \arrow[r,bend left, "{|-|}"] \arrow[r, phantom, "\perp"]  & \cat{Top} \arrow[l, bend left, "\fun{Sing}"]
\end{tikzcd}
\end{equation}
obtained by applying Kan's construction \cite[1.5.1]{riehl-cathtpy} to the functor $\Delta^\bullet \colon \DDelta \to \cat{Top}$ that sends $[n] \in \DDelta$ to the geometric $n$-simplex $|\Delta^n|$. The right adjoint carries a space $Y$ to its \textbf{total singular complex}, the simplicial set whose $n$-simplices are continuous maps $|\Delta^n| \to Y$. The left adjoint carries a simplicial set $X$ to its \textbf{geometric realization}, a simplicial complex formed by gluing in a topological $n$-simplex for each $n$-simplex of $X$ along the face and degeneracy maps.\footnote{This adjunction defines an ``equivalence of homotopy theories'' in a sense made precise by the notion of Quillen equivalence between model categories \cite[\S II.3]{quillen}.}
For a more leisurely introduction to simplicial sets, see \cite{riehl-leisurely}.
\end{dig}

A simplicial category is typically thought of as a category with objects together with mapping spaces (i.e., simplicial sets) between them. There is an alternate presentation of this data which will also be convenient in which an $n$-\emph{simplex} in a mapping space from $a$ to $b$ is encoded as an $n$-\emph{arrow} from $a$ to $b$.

\begin{defn} A \textbf{simplicial category} $\cA_\bullet$ is given by categories $\cA_n$ for each $n \geq 0$ with a common set of objects $\ob\cA$ and whose morphisms are called $n$-\textbf{arrows} that assemble into a diagram $\DDelta^\op \to \cat{Cat}$ of identity-on-objects functors.
\begin{equation}\label{eq:simp-category-shape}
\begin{tikzcd}
\cA_\bullet := &
\cA_0 \arrow[r, tail] & \arrow[l, shift left=0.75em, two heads] \arrow[l, shift right=0.75em, two heads]  \cA_1  \arrow[r, shift left=0.75em, tail] \arrow[r, shift right=0.75em, tail] &   \arrow[l, two heads] \arrow[l, two heads, shift right=1.5em] \arrow[l, two heads, shift left=1.5em]   \cA_2 \arrow[r, tail] \arrow[r, tail, shift right=1.5em] \arrow[r, tail, shift left=1.5em]   &\arrow[l, two heads, shift right=0.75em] \arrow[l, two heads, shift right=2.25em] \arrow[l, two heads, shift left=2.25em]  \arrow[l, two heads, shift left=0.75em] \cA_3  & \cdots
\end{tikzcd}
\end{equation}
\end{defn}

\begin{prop} The following are equivalent:
\begin{itemize}
\item a simplicial category $\cA_\bullet$ with object set $\ob\cA$
\item a simplicially enriched category $\cA$ with objects $\ob\cA$
\end{itemize}
\end{prop}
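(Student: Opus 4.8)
The plan is to exhibit the two kinds of data as two packagings of the same underlying information and to write down mutually inverse translations between them. Both sides fix the object set $\ob\cA$, so all content lives in the morphisms, and the crux is the dimensionwise nature of products in $\sSet$: for simplicial sets $X,Y$ one has $(X\times Y)_n = X_n\times Y_n$, which is exactly what lets composition data be transposed between the two descriptions.

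First I would pass from a simplicial category $\cA_\bullet$ to a simplicially enriched category. For each ordered pair of objects $a,b$ set $\cA(a,b)_n := \cA_n(a,b)$, the set of $n$-arrows from $a$ to $b$. Because the structure functors $\alpha^*\colon\cA_m\to\cA_n$ indexing $\cA_\bullet$ are identity-on-objects, each restricts to a function $\cA_m(a,b)\to\cA_n(a,b)$, and these assemble $\cA(a,b)$ into a simplicial set. For fixed $a,b,c$ the composition operations of the categories $\cA_n$ give functions $\cA_n(b,c)\times\cA_n(a,b)\to\cA_n(a,c)$; by the dimensionwise formula this is precisely the data of a candidate simplicial map $\cA(b,c)\times\cA(a,b)\to\cA(a,c)$, and the requirement that each $\alpha^*$ preserve composition is exactly the naturality in $[n]$ that upgrades this candidate to an honest map of simplicial sets. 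The identity $0$-arrows $\id_a\in\cA_0(a,a)$ and their degeneracies supply unit maps $\Delta^0\to\cA(a,a)$, and associativity and unitality of the enriched composition hold because they hold in each category $\cA_n$.

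Conversely, from a simplicially enriched category $\cA$ I would build $\cA_\bullet$ by letting $\cA_n$ have object set $\ob\cA$ and hom-sets $\cA_n(a,b):=\cA(a,b)_n$, with composition the $n$-simplex level of the enriched composition $\cA(b,c)\times\cA(a,b)\to\cA(a,c)$ (again using that this product is levelwise) and identities the image of the unit $\Delta^0\to\cA(a,a)$. Each $\cA_n$ is a category because associativity and unitality are equalities of simplicial maps and so hold at level $n$. A map $\alpha\colon[n]\to[m]$ in $\DDelta$ acts on each mapping space, and the induced functions $\cA_m(a,b)\to\cA_n(a,b)$ preserve composition and identities precisely because the enriched composition and unit are maps of simplicial sets; this makes each $\alpha^*$ an identity-on-objects functor and $\cA_\bullet$ a functor $\DDelta^\op\to\cat{Cat}$. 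The two passages are visibly inverse on objects, hom-data, composition, and units, giving the equivalence.

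I expect the only genuine subtlety — the point that makes the statement true rather than a tautology — to be the bookkeeping that identifies ``each structure map $\alpha^*$ is a functor'' on the simplicial-category side with ``composition and the unit are simplicial maps'' on the enriched side; both reduce to the same naturality squares once $(X\times Y)_n=X_n\times Y_n$ is invoked. A slicker, essentially equivalent route would be to recognize $\cat{Cat}_{\ob\cA}$ (categories with object set $\ob\cA$ and identity-on-objects functors) as the category of monoids in $(\cat{Set}^{\ob\cA\times\ob\cA},\otimes)$ for the matrix-multiplication tensor, note that when this tensor is lifted to $\sSet^{\ob\cA\times\ob\cA}$ it is computed dimensionwise, and then apply the general fact that monoids in a pointwise-monoidal diagram category $\cat{V}^{\DDelta^\op}$ coincide with $\DDelta^\op$-diagrams of monoids in $\cat{V}$; this yields both directions at once without the hand computation.
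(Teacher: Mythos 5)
Your proof is correct and takes the same route as the paper, whose entire proof is the one-line observation that an $n$-arrow in $\cA_n(x,y)$ corresponds to an $n$-simplex in the mapping space $\cA(x,y)$; your write-up simply makes explicit the bookkeeping (the dimensionwise formula $(X\times Y)_n = X_n\times Y_n$ and the identification of functoriality of the $\alpha^*$ with simpliciality of composition and units) that the paper leaves implicit. The monoid-in-matrices aside is a nice additional packaging but not a different argument in substance.
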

\begin{proof}
For any $x,y \in \ob\cA$, an $n$-arrow in $\cA_n(x,y)$ corresponds to an $n$-simplex in the mapping space $\cA(x,y)$.
\end{proof}

For any ordinary category $\cA$, we now introduce a simplicial category $\gC\cA$ whose $n$-arrows parametrize the data of a homotopy coherent diagram of shape $\cA$.\footnote{Cordier and Porter \cite{CP-vogt} write $\mathbf{S}(\cA)$ for the simplicial category $\gC\cA$. Here we use notation that some readers might recognize from a related context. In Theorem \ref{thm:resolution-equals-realization}, we prove that these two objects are isomorphic.}

\begin{defn}[free resolutions]
Forgetting composition, let $U\cA$ denote the underlying reflexive directed graph of a category $\cA$, and let $FU\cA$ denote the free category on the underlying reflexive directed graph of $\cA$. It has the same objects as $\cA$ and its non-identity arrows are strings of composable non-identity arrows of $\cA$. 

We define a simplicial category $\gC\cA_\bullet$ with $\ob\gC\cA=\ob\cA$ and with the category of $n$-arrows $\gC\cA_n := (FU)^{n+1}\cA$. A non-identity $n$-arrow is a string of composable arrows in $\cA$ with each arrow in the string enclosed in exactly $n$ pairs of well-formed parentheses. In the case $n=0$, this recovers the previous description of the non-identity 0-arrows in $FU\cA$, strings of composable non-identity arrows of $\cA$. 

It remains to define the required identity-on-objects functors:\footnote{More concisely, the free and forgetful functors just described define an adjunction
\[
\begin{tikzcd}[ampersand replacement=\&]
\cat{rDirGph} \arrow[r,bend left, "F"] \arrow[r, phantom, "\perp"]  \& \cat{Cat} \arrow[l, bend left, "U" pos=0.5]
\end{tikzcd}
\]
between small categories and reflexive directed graphs inducing a comonad $FU$ on $\cat{Cat}$; see Definition \ref{defn:adjunction}. The simplicial object $\gC\cA_\bullet$ is defined by evaluating the comonad resolution for $(FU, \epsilon, F\eta U)$ on a small category $\cA$. The face and degeneracy maps are whiskerings of the unit and counit of the adjunction; hence the notation. This structure will reappear in Part \ref{part:III} below.}
\[
\begin{tikzcd}
\gC\cA_\bullet := &
FU\cA\arrow[r, tail] & \arrow[l, shift left=0.75em, two heads] \arrow[l, shift right=0.75em, two heads]  (FU)^2\cA  \arrow[r, shift left=0.75em, tail] \arrow[r, shift right=0.75em, tail] &   \arrow[l, two heads] \arrow[l, two heads, shift right=1.5em] \arrow[l, two heads, shift left=1.5em]   (FU)^3\cA \arrow[r, tail] \arrow[r, tail, shift right=1.5em] \arrow[r, tail, shift left=1.5em]   &\arrow[l, two heads, shift right=0.75em] \arrow[l, two heads, shift right=2.25em] \arrow[l, two heads, shift left=2.25em]  \arrow[l, two heads, shift left=0.75em] (FU)^4\cA  & \cdots
\end{tikzcd}
\]
For $j \geq 1$, the face maps \[(FU)^k\epsilon(FU)^j  \colon (FU)^{k+j+1}\cA \to (FU)^{k+j}\cA\] remove the parentheses that are contained in exactly $k$ others, while $FU\cdots FU\epsilon$ composes the morphisms inside the innermost parentheses. For $j \geq 1$, the degeneracy maps \[F(UF)^k\eta(UF)^jU \colon (FU)^{k+j+1}\cA \to (FU)^{k+j+2}\cA\] double up the parentheses that are contained in exactly $k$ others, while $F\cdots UF\eta U$ inserts parentheses around each individual morphism.
\end{defn}

\begin{ex} In the case of a discrete group $G$ regarded as a one-object category $\cat{B}G$, the free resolution $\gC{\cat{B}G}$ is defined by specifying the single endo-hom-set of each category $(FU)^n\cat{B}G$, together with the composition action. The underlying graph of $\cat{B}G$ is given by the non-identity elements of $G$, and thus $(FU)\cat{B}G$ is the group of words in these letters, i.e., the free group on the non-identity elements of $G$. The group $(FU)^2\cat{B}G$ is then the group of words of words and so on. 
\end{ex} 

\begin{exc}\label{exc:omega} Compute $\gC\bbomega$ and show that its $n$-arrows enumerate the data described at the introduction to this section.\footnote{In fact, it has more $n$-arrows than the $n$-homotopies describe above. We will be able to explain this when we return to this simplicial category in Example \ref{ex:coherent-simplex}.}
\end{exc}

The category $\cA$ can also be thought of as a discrete simplicial category in which the diagram (\ref{eq:simp-category-shape}) is constant at $\cA$, so the only $n$-arrows are degenerated 0-arrows. There is a canonical ``augmentation'' map $\epsilon \colon \gC\cA \to \cA$ determined by its degree zero component $\epsilon \colon FU\cA \to \cA$ which is just given by composition in $\cA$.

\begin{prop} The functor $\epsilon \colon\gC\cA \to \cA$ is a local homotopy equivalence of simplicial sets. That is, for any pair of objects $x,y \in \cA$, the map $\epsilon \colon \gC\cA(x,y) \to \cA(x,y)$ is a homotopy equivalence: $\gC\cA(x,y)$ is homotopy equivalent to the discrete set $\cA(x,y)$ of arrows in $\cA$ from $x$ to $y$.
\end{prop}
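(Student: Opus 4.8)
The plan is to use the description, given in the footnote to the definition of free resolutions, of $\gC\cA_\bullet$ as the comonad resolution associated to the adjunction $F \dashv U$ between $\cat{Cat}$ and $\cat{rDirGph}$, and to exploit the classical fact that such a resolution becomes \emph{split} --- equipped with a contracting extra degeneracy --- as soon as one forgets composition by applying $U$.

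First I would fix objects $x,y \in \ob\cA$ and note that the mapping simplicial set $\gC\cA(x,y)$ is exactly the simplicial set of edges from $x$ to $y$ read off from the simplicial reflexive graph $U\gC\cA_\bullet$, whose set of $n$-simplices is the hom-set $\bigl((FU)^{n+1}\cA\bigr)(x,y)$. Since the counit $\epsilon$, and hence the augmentation $\epsilon \colon \gC\cA \to \cA$, is the identity on objects, this augmented simplicial set sits over the constant simplicial set on the discrete set $\cA(x,y)$; extracting a single hom-set is legitimate because every face, degeneracy, and augmentation map is identity-on-vertices.

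The crux is to produce an extra degeneracy for this augmented simplicial set. The unit $\eta \colon \Id_{\cat{rDirGph}} \to UF$ is identity on vertices, and the triangle identity $U\epsilon \cdot \eta U = \id_U$ holds; I would use it to define
\[
s_{-1} := \eta_{U(FU)^{n+1}\cA} \colon U(FU)^{n+1}\cA \longrightarrow U(FU)^{n+2}\cA
\]
for $n \geq 0$, together with $s_{-1} := \eta_{U\cA} \colon U\cA \to U(FU)\cA$ at the bottom. The relation $d_0 s_{-1} = \id$ is then immediate from the triangle identity, while the remaining identities $d_{i+1} s_{-1} = s_{-1} d_i$ and $s_{i+1} s_{-1} = s_{-1} s_i$ follow from naturality of $\eta$. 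Because $\eta$ is identity on vertices, these maps restrict to the edges from $x$ to $y$ and thereby equip the augmented simplicial set $\gC\cA(x,y) \to \cA(x,y)$ with an extra degeneracy.

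I would finish by invoking the standard lemma that an augmented simplicial set admitting an extra degeneracy is simplicially contractible onto its augmentation: the extra degeneracies assemble into an explicit simplicial deformation retraction of $\gC\cA(x,y)$ onto the constant simplicial set $\cA(x,y)$, so $\epsilon$ is a simplicial homotopy equivalence, in particular a homotopy equivalence, and $\cA(x,y)$ is discrete as claimed. The conceptual heart --- and the step requiring the most care --- is that this contracting homotopy exists only \emph{after} applying $U$: the section $\eta$ is merely a morphism of reflexive graphs and never a functor, which is precisely why $\epsilon$ induces equivalences on all mapping spaces even though $\epsilon \colon \gC\cA \to \cA$ is very far from an equivalence of categories, the free categories $(FU)^{n+1}\cA$ being enormous. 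Beyond this, the only things to verify are the extra-degeneracy identities above and the harmless restriction to a fixed hom-set.
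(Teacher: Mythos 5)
Your proposal is correct and is essentially the paper's own argument: the paper likewise observes that the comonad resolution becomes a split augmented simplicial object after applying $U$ (via the unit $\eta$, which is a graph morphism but not a functor), that all maps are identity-on-objects so the splitting restricts to each hom-set $\gC\cA(x,y) \to \cA(x,y)$, and then invokes classical simplicial homotopy theory (citing Meyer) for the fact that an extra degeneracy yields a simplicial deformation retraction onto the augmentation. You have merely made explicit the extra-degeneracy identities that the paper leaves to the cited reference.
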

\begin{proof}
The augmented simplicial object
\[
\begin{tikzcd}
\cA \arrow[r, bend right, dashed] &
(FU)\cA\arrow[r, tail] \arrow[l, two heads] \arrow[r, bend right=35, dashed]& \arrow[l, shift left=0.75em, two heads] \arrow[l, shift right=0.75em, two heads]  (FU)^2\cA  \arrow[r, shift left=0.75em, tail] \arrow[r, shift right=0.75em, tail] \arrow[r, bend right=50, dashed] &   \arrow[l, two heads] \arrow[l, two heads, shift right=1.5em] \arrow[l, two heads, shift left=1.5em]   (FU)^3\cA \arrow[r, tail] \arrow[r, tail, shift right=1.5em] \arrow[r, tail, shift left=1.5em] \arrow[r, bend right=75, dashed]  &\arrow[l, two heads, shift right=0.75em] \arrow[l, two heads, shift right=2.25em] \arrow[l, two heads, shift left=2.25em]  \arrow[l, two heads, shift left=0.75em] (FU)^4\cA  & \cdots
\end{tikzcd}
\] 
is split at the level of reflexive directed graphs (i.e., after applying $U$). These splittings are not functors, but that won't matter. These directed graph morphisms displayed here are all identity on objects, which means that for any $x,y$ there is a split augmented simplicial set
\[
\begin{tikzcd}[column sep=small]
\cA(x,y) \arrow[r, bend right] &
(FU)\cA(x,y)\arrow[r, tail] \arrow[l, two heads] \arrow[r, bend right=35]& \arrow[l, shift left=0.75em, two heads] \arrow[l, shift right=0.75em, two heads]  (FU)^2\cA(x,y)  \arrow[r, shift left=0.75em, tail] \arrow[r, shift right=0.75em, tail] \arrow[r, bend right=50] &   \arrow[l, two heads] \arrow[l, two heads, shift right=1.5em] \arrow[l, two heads, shift left=1.5em]   (FU)^3\cA(x,y) \arrow[r, tail] \arrow[r, tail, shift right=1.5em] \arrow[r, tail, shift left=1.5em] \arrow[r, bend right=75]  &\arrow[l, two heads, shift right=0.75em] \arrow[l, two heads, shift right=2.25em] \arrow[l, two heads, shift left=2.25em]  \arrow[l, two heads, shift left=0.75em] (FU)^4\cA(x,y) \cdots
\end{tikzcd}
\] 
and now some classical simplicial homotopy theory proves the claim \cite{meyer}.
\end{proof}

\subsection{Homotopy coherent diagrams and homotopy coherent natural transformations}

Finally, we can give a precise definition of the key notion of a homotopy coherent diagram:

\begin{defn} A \textbf{homotopy coherent diagram} of shape $\cA$ is a functor $\gC\cA \to \Space$.
\end{defn}

\begin{ex}
A strictly commutative diagram $F\colon \cA\to\Space$ gives rise to a homotopy coherent diagram by composing with the augmentation map
\begin{equation}\label{eq:htpy-from-strict}
\begin{tikzcd}
\gC\cA \arrow[r, "\epsilon"] & \cA \arrow[r, "F"] & \Space.
\end{tikzcd}
\end{equation}
In this case, all $n$-homotopies are identities.
\end{ex}

Not every homotopy commutative diagram can be made homotopy coherent. The following counterexample was suggested by Thomas Kragh and communicated by Hiro Tanaka.

\begin{ex}
Let $p \colon E \to B$ be a Serre fibration with $i \colon F \to E$ the inclusion of the fiber over the basepoint $*$ of $B$. A diagram
\[ 
\begin{tikzcd} & X \arrow[dl, "f"'] \arrow[dr, "e"]  \arrow[dd, "\ast" near end] \\ F \arrow[rr, "i" near start, crossing over] \arrow[dr, "\ast"'] & & E \arrow[dl, "p"] \\ & B
\end{tikzcd}
\]
is \emph{homotopy commutative} if there exist homotopies $\alpha \colon e \simeq if$ and $\beta \colon pe \simeq *$, the other two triangles being strictly commutative. The diagram is then \emph{homotopy coherent} if and only if there exists a 2-homotopy between $p\alpha \colon pe \simeq *$ and $\beta$. If this is the case, then since $p$ is a Serre fibration it is possible to lift the $2$-homotopy along $p$ to define a homotopy $\alpha' \colon e \simeq if$ so that $p\alpha' = \beta$. Applying the universal property of the fiber $F$ as the homotopy pullback of $p$ along the inclusion of the basepoint, the homotopy $\beta$ induces a map $g \colon X \to F$ and the homotopy $\alpha'$ then implies that $f$ and $g$ are homotopic.

Applying these observations in the case of the Hopf fibration, consider the diagram 
\[ 
\begin{tikzcd} & S^1 \arrow[dl, "n"'] \arrow[dr, "i"]  \arrow[dd, "\ast" near end] \\ S^1 \arrow[rr, "i" near start, crossing over] \arrow[dr, "\ast"'] & & S^3 \arrow[dl, "p"] \\ & S^2
\end{tikzcd}
\]
involving a map $n \colon S^1 \to S^1$ of degree $n \neq 1$. Since $\pi_1 S^3=0$, there exists a homotopy $\alpha \colon i \simeq in$.  Both $pi$ and $pin$ equal the constant map $*$ at the basepoint of $S^2$, but $p\alpha$ is not 2-homotopic to the constant homotopy $*$, for if it were we would obtain a homotopy between the map of degree $n$ and the identity map $S^1 \to S^1$. Thus, this homotopy commutative diagram cannot be made homotopy coherent.
\end{ex}

A natural transformation is a type of higher morphism between parallel functors. Natural transformations are analogous to homotopies with the category $[1] = 0 \to 1$ playing the role of the interval. 

\begin{defn}\label{defn:nat-trans} Given a parallel pair of functors $F,G \colon \cC \to \cD$, a \textbf{natural transformation} $\alpha \colon F \to G$ is specified by a functor $\alpha \colon \cC \times [1] \to \cD$ that restricts on the ``endpoints'' of $[1]$ to $F$ and $G$ as follows:
\[
\begin{tikzcd} \cC \arrow[d, tail, "0"'] \arrow[dr, "F"] \\ \cC \times [1] \arrow[r, "\alpha"]  & \cD \\ \cC \arrow[u, tail, "1"] \arrow[ur, "G"']
\end{tikzcd}
\]
\end{defn}

This suggests the following definition of a homotopy coherent natural transformation.

\begin{defn} A \textbf{homotopy coherent natural transformation} $\alpha \colon F \to G$ between homotopy coherent diagrams $F$ and $G$ of shape $\cA$ is a homotopy coherent diagram of shape $\cA \times [1]$ that restricts on the endpoints of $[1]$ to $F$ and $G$ as follows:
\[
\begin{tikzcd} \gC\cA \arrow[d, tail, "0"'] \arrow[dr, "F"] \\ \gC(\cA \times [1]) \arrow[r, "\alpha"]  & \Space \\ \gC\cA \arrow[u, tail, "1"] \arrow[ur, "G"']
\end{tikzcd}
\]
\end{defn}

Note that the data of a pair of homotopy coherent natural transformations $\alpha \colon F \to G$ and $\beta \colon G \to H$ between homotopy coherent diagrams of shape $\cA$ does not uniquely determine a (vertical) ``composite'' homotopy coherent natural transformation $F \to H$ because this data does not define a homotopy coherent diagram of shape $\cA \times [2]$, where $[2] = 0 \to 1 \to 2$.\footnote{In notation to be introduced in Part \ref{part:II}, $\alpha$ and $\beta$ define a diagram of shape $\gC(\cA\times \Lambda^2_1)$ rather than a diagram of shape $\gC(\cA\times[2])$, where $\Lambda^2_1$ is the shape of the generating reflexive directed graph of the category $[2]$.} This observation motivated Boardman and Vogt to define, in place of a \emph{category} of homotopy coherent diagrams and natural transformations of shape $\cA$, a \emph{quasi-category} of homotopy coherent diagrams and natural transformations of shape $\cA$ \cite[\S IV.2]{BV}. 

\begin{defn} For any category $\cA$, let $\fun{Coh}(\cA,\Space)$ denote the simplicial set whose $n$-simplices are homotopy coherent diagrams of shape $\cA \times [n]$, i.e., are simplicial functors
\[ \gC(\cA\times[n]) \to \Space,\] where $[n] \subset \bbomega$ denotes the category freely generated by the reflexive directed graph
\[
\begin{tikzcd}
{[n]} := & 0 \arrow[r] & 1 \arrow[r] & 2 \arrow[r] & 3 \arrow[r] & \cdots \arrow[r] & n.
\end{tikzcd}
\]
\end{defn}

The simplicial category $\Space$ has an important property alluded to above: its mapping spaces $\Map(X,Y)$ are \textbf{Kan complexes}, simplicial sets in which any horn 
\[
\begin{tikzcd}
\Lambda^n_k \arrow[r] \arrow[d, hook] & \Map(X,Y) \\ \Delta^n \arrow[ur, dashed]
\end{tikzcd}
\]
with $0 \leq k \leq n$ may be filled to a simplex. Any simplicial category, such as $\Space$, extracted from a topologically-enriched category is automatically \emph{Kan complex enriched} because its mapping spaces are defined as total singular complexes of topological spaces \cite[\S 16.1]{riehl-cathtpy}. It is because of this property that:

\begin{thm}[{\cite[4.9]{BV}}] $\fun{Coh}(\cA,\Space)$ is a \textbf{quasi-category}, i.e., any inner horn
\[
\begin{tikzcd}
\Lambda^n_k \arrow[r] \arrow[d, hook] & \fun{Coh}(\cA,\Space) \\ \Delta^n \arrow[ur, dashed]
\end{tikzcd}
\]
with $0  < k <n$ admits a filler.
\end{thm}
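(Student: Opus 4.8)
The plan is to recognize $\fun{Coh}(\cA,\Space)$ as an exponential of the homotopy coherent nerve $\gN\Space$, and thereby to separate two independent facts: that $\gN\Space$ is itself a quasi-category, and that exponentiating a quasi-category by an arbitrary simplicial set again yields a quasi-category. Write $\gC \dashv \gN$ for the adjunction between the realization functor and the homotopy coherent nerve; by Theorem~\ref{thm:resolution-equals-realization} the free resolution of a category $\cB$ agrees with the realization $\gC(N\cB)$ of its nerve $N\cB$. Using $N(\cA\times[n])\cong N\cA\times\Delta^n$ and then this adjunction, I would compute, naturally in $[n]$,
\[ \fun{Coh}(\cA,\Space)_n = \sSet\text{-}\Cat\bigl(\gC(\cA\times[n]),\Space\bigr)\cong\sSet\text{-}\Cat\bigl(\gC(N\cA\times\Delta^n),\Space\bigr)\cong\sSet(N\cA\times\Delta^n,\gN\Space). \]
Since $\sSet(N\cA\times\Delta^n,\gN\Space)=\bigl((\gN\Space)^{N\cA}\bigr)_n$, this identifies $\fun{Coh}(\cA,\Space)\cong(\gN\Space)^{N\cA}$.

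The easy half is then to reduce to $\cA=[0]$. An inner horn $\Lambda^n_k\hookrightarrow\Delta^n$ (with $0<k<n$) lifting against $(\gN\Space)^{N\cA}$ transposes, across the exponential adjunction in $\sSet$, to a lift of $\Lambda^n_k\times N\cA\hookrightarrow\Delta^n\times N\cA$ against $\gN\Space$. This map is the pushout-product of $\Lambda^n_k\hookrightarrow\Delta^n$ with $\emptyset\hookrightarrow N\cA$, hence inner anodyne by Joyal's theorem that the inner anodyne maps are closed under pushout-product with monomorphisms. As a quasi-category has the right lifting property against every inner anodyne map, the filler exists once we know $\gN\Space$ is a quasi-category.

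The real content is thus that $\gN\Space$ is a quasi-category (Cordier's theorem, refining \cite{BV}). Transposing once more along $\gC\dashv\gN$, filling an inner horn $\Lambda^n_k\to\gN\Space$ is the same as extending a simplicial functor along the inclusion $\gC\Lambda^n_k\hookrightarrow\gC\Delta^n$. I would analyze this inclusion concretely: it is the identity on the object set $\{0,\dots,n\}$, and $\gC\Delta^n(i,j)$ is the nerve of the poset of subsets $S$ with $\{i,j\}\subseteq S\subseteq\{i,\dots,j\}$, a cube $(\Delta^1)^{j-i-1}$. The key point is that when $0<k<n$ the interval $\{0,\dots,n\}\setminus\{k\}$ is not itself an interval, so the inclusion is an \emph{isomorphism on every mapping space except $(0,n)$}; there it is the inclusion into $(\Delta^1)^{n-1}$ of the union of all but the $k$th of the codimension-one faces through the initial vertex. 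Extending the functor over this inclusion is then a single lifting problem in the mapping space $\Space(F0,Fn)$.

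The main obstacle — and the crux of the whole theorem — is the combinatorial lemma that this last inclusion is \emph{anodyne} precisely because $k$ is interior: factoring off the $k$th cube coordinate exhibits it as the product with $\Delta^1$ of the (anodyne) inclusion of all bottom faces of $(\Delta^1)^{n-2}$, and products of anodyne maps with a simplicial set remain anodyne. Since $\Space$ is Kan-complex enriched, its mapping spaces lift against all anodyne maps, so the extension exists; one checks the resulting fillers automatically respect composition because $(0,n)$ is the only altered hom and every composite already lies in the defined part. For $k=0$ or $k=n$ the analogous inclusion is merely a non-trivial cube boundary and no such lift need exist — exactly the failure exhibited by the Hopf-fibration example above. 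Assembling the fillers yields a simplicial functor $\gC\Delta^n\to\Space$, and hence the desired inner-horn filler, completing the argument.
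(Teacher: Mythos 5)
Your global architecture is exactly the paper's: identify $\fun{Coh}(\cA,\Space)\cong\gN\Space^{\cA}$ via the adjunction $\gC\dashv\gN$ and Theorem \ref{thm:resolution-equals-realization}, reduce to showing $\gN\Space$ is a quasi-category (the paper quotes the exponential-ideal property where you invoke Joyal's pushout-product stability of inner anodynes --- same fact), and transpose the inner horn to a subcomputad extension $\gC\Lambda^n_k\hookrightarrow\gC\Delta^n$ that changes only the mapping space from $0$ to $n$. But there is a genuine error at the crux: your identification of $\gC\Lambda^n_k(0,n)$ inside the cube $(\Delta^1)^{n-1}$ is wrong. The subcomputad $\gC\Lambda^n_k$ contains not only the atomic arrows whose beads lie in $\Lambda^n_k$ but all of their \emph{composites}, and an arrow from $0$ to $n$ factoring through any intermediate vertex $j$ --- including $j=k$ --- has beads supported on $[0,j]$ and $[j,n]$, which lie in the faces $\partial_n$ and $\partial_0$ respectively, both present in $\Lambda^n_k$ precisely because $0<k<n$. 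In flag coordinates these composites are the chains whose minimal set properly contains $\{0,n\}$, and they sweep out all the facets of the cube through the \emph{terminal} vertex. So $\gC\Lambda^n_k(0,n)$ is the union of \emph{all} $2(n-1)$ facets except the single bottom facet indexed by $k$ --- a genuine cubical horn, as asserted in Theorem \ref{thm:nerve-to-quasi} --- not merely the union of the bottom facets other than the $k$th. Note that your own (correct) observation that ``every composite already lies in the defined part'' is exactly what forces the top facets into $\gC\Lambda^n_k(0,n)$, contradicting your description of it.

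This is not a cosmetic slip: the filler over the cube is constrained on the top facets by functoriality --- its values there must be the composites of the given functor's values on the homs $(0,j)$ and $(j,n)$ --- and a lift extending only your bottom-facet union need not satisfy this, so ``assembling the fillers'' would not yield a simplicial functor. The repair is the paper's: the cubical horn inclusion is anodyne as a \emph{pushout-product} of $\{1\}\hookrightarrow\Delta^1$ in the $k$th coordinate with the boundary inclusion $\partial\bigl((\Delta^1)^{n-2}\bigr)\hookrightarrow(\Delta^1)^{n-2}$, whereas your factorization uses the plain product with $\Delta^1$, which is precisely where the top facets get dropped. Your closing remark about outer horns is also only half the story: for $k=0$ or $k=n$ the hom from $0$ to $n$ sees the full boundary $\partial\bigl((\Delta^1)^{n-1}\bigr)$ as you say, but in addition other mapping spaces change (e.g.\ $(1,n)$ when $k=0$, since the bead on $\{1,\dots,n\}$ spans the missing face $\partial_0$). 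With the cubical horn corrected, the remainder of your argument coincides with the paper's proof via Theorem \ref{thm:nerve-to-quasi} and Corollary \ref{cor:coherent-diagram-quasi-category}.
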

\begin{proof} This can be checked directly, or deduced as an immediate consequence --- see Corollary \ref{cor:coherent-diagram-quasi-category} --- of a result that we will prove in Part \ref{part:II}.
\end{proof}

Boardman and Vogt referred to inner horn filling as the ``restricted Kan condition.'' Joyal introduced the name ``quasi-categories'' for these ``weak Kan complexes.'' Quasi-categories define a popular model of $(\infty,1)$-\emph{categories}, categories weakly enriched in topological spaces, about more which in Part \ref{part:III}.

Any quasi-category has a \textbf{homotopy category} whose objects are the vertices and whose morphisms are 1-simplices up to a homotopy relation $f \simeq g$ between parallel 1-simplices $f,g \colon x \to y$ witnessed by a 2-simplex with boundary:
\[
\begin{tikzcd} & y \arrow[dr, equals] \\ x \arrow[ur, "f"] \arrow[rr, "g"'] & & y
\end{tikzcd}
\]
Composition relations are also witnessed by 2-simplices: the homotopy class of $f \colon x \to y$ and the homotopy class of $g \colon y \to z$ compose to the homotopy class of $h \colon x \to z$ if and only if there is a 2-simplex whose boundary has the form
\[
\begin{tikzcd} & y \arrow[dr, "g"] \\ x \arrow[ur, "f"] \arrow[rr, "h"'] & & z
\end{tikzcd}
\]

The following result was first proven by Vogt and then generalized by Cordier and Porter:

\begin{thm}[{\cite{vogt1,CP-vogt}}]\label{thm:CPV}
The natural map $\Space^\cA \to \fun{Coh}(\cA,\Space)$ defined by (\ref{eq:htpy-from-strict}) induces an equivalence of homotopy categories
\[ \begin{tikzcd} \Ho(\Space^\cA) \arrow[r, "\simeq"] &  \Ho\fun{Coh}(\cA,\Space),\end{tikzcd}\] where $\Ho(\Space^\cA)$ is defined by localizing at the componentwise homotopy equivalences.
\end{thm}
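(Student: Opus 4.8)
The plan is to reinterpret both sides as invariants of the single simplicially enriched category $\Space^{\gC\cA}$ of \emph{all} simplicial functors $\gC\cA \to \Space$ (with its hom-spaces of homotopy coherent natural transformations), and then to transport across the augmentation $\epsilon \colon \gC\cA \to \cA$. First I would record that $\fun{Coh}(\cA,\Space)$ is equivalent, as a quasi-category, to the homotopy coherent nerve $\gN(\Space^{\gC\cA})$. Indeed an $n$-simplex $\gC(\cA\times[n]) \to \Space$ corresponds, via the oplax comparison $\gC(\cA\times[n]) \to \gC\cA \times \gC[n]$ and the cartesian closure of simplicial categories, to a simplicial functor $\gC[n] \to \Space^{\gC\cA}$; here I identify $\gC$ of the nerve of a category with the free resolution via Theorem~\ref{thm:resolution-equals-realization}. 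Since $\Space$ is Kan-complex enriched, so is $\Space^{\gC\cA}$, and for the homotopy coherent nerve of a Kan-enriched category the quasi-categorical mapping spaces recover the enriched hom-spaces up to homotopy. Hence $\Ho\fun{Coh}(\cA,\Space)$ has the homotopy coherent diagrams as its objects and $\pi_0\Map_{\Space^{\gC\cA}}(F,G)$ as its hom-sets.

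Next I would bring in the model-categorical input. Equip $\Space^\cA$ and $\Space^{\gC\cA}$ with their projective model structures, so that $\Ho(\Space^\cA)$ is exactly the stated localization at componentwise homotopy equivalences. Because $\epsilon \colon \gC\cA \to \cA$ is bijective on objects and a local homotopy equivalence (the preceding Proposition), it is a Dwyer--Kan equivalence of simplicial categories, and restriction together with left Kan extension along it form a Quillen equivalence
\[
\begin{tikzcd}[ampersand replacement=\&]
\Space^\cA \arrow[r, bend left, "\epsilon_!"] \arrow[r, phantom, "\perp"] \& \Space^{\gC\cA} \arrow[l, bend left, "\epsilon^*"]
\end{tikzcd}
\]
The map of the statement is the functor induced on homotopy categories by $\epsilon^*$ under the identification of the previous paragraph.

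To finish I would verify essential surjectivity and full faithfulness. For essential surjectivity I rectify: given a homotopy coherent diagram $F \colon \gC\cA \to \Space$, which is automatically projectively fibrant, choose a cofibrant replacement $QF \xrightarrow{\sim} F$ and set $\tilde F := \epsilon_! QF \colon \cA \to \Space$. The derived counit supplies a pointwise homotopy equivalence $\epsilon^*\tilde F \xrightarrow{\sim} F$, that is, a homotopy coherent natural transformation that is a componentwise equivalence; such a transformation is invertible in $\Ho\fun{Coh}(\cA,\Space)$, so $F$ is isomorphic there to the image of the strict diagram $\tilde F$. For fullness and faithfulness I compare derived mapping spaces: for strict $F,G$ the Quillen equivalence gives
\[ \mathbb{R}\Map_{\Space^\cA}(F,G) \;\simeq\; \mathbb{R}\Map_{\Space^{\gC\cA}}(\epsilon^*F, \epsilon^*G) \;\simeq\; \Map_{\Space^{\gC\cA}}(\epsilon^*F,\epsilon^*G), \]
and applying $\pi_0$ identifies $\Ho(\Space^\cA)(F,G)$ with $\Ho\fun{Coh}(\cA,\Space)(F\epsilon, G\epsilon)$ compatibly with the comparison functor.

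The hard part will be the last displayed equivalence: that the \emph{un}derived hom-space of homotopy coherent natural transformations already computes the derived mapping space. This is where the freeness of $\gC\cA$ does the work — one must know that $\epsilon^*F$ is cofibrant enough, i.e. that mapping out of the free resolution builds in exactly the coherence data needed so that no further cofibrant replacement changes the homotopy type, while $\epsilon^*G$ is pointwise Kan and hence fibrant. Establishing this homotopical correctness, together with the Dwyer--Kan equivalence $\gC(\cA\times[n]) \to \gC\cA\times\gC[n]$ invoked in the first paragraph, carries the real technical weight; everything else is formal manipulation of the Quillen equivalence furnished by the Proposition.
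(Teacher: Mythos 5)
Your plan has a genuine gap, and it sits precisely where you parked ``the hard part.'' Two load-bearing claims are unjustified. First, ``since $\Space$ is Kan-complex enriched, so is $\Space^{\gC\cA}$'' is false at this level of generality: the enriched hom $\Map_{\Space^{\gC\cA}}(F,G)$ is an end, i.e.\ an equalizer of products of Kan complexes, and Kan complexes are not closed under equalizers; the usual SM7 argument makes this hom a Kan complex only when $F$ is \emph{projectively cofibrant} (and $G$ pointwise fibrant). So both your identification $\fun{Coh}(\cA,\Space)\simeq\gN(\Space^{\gC\cA})$ and your formula $\Ho\fun{Coh}(\cA,\Space)(F,G)=\pi_0\Map_{\Space^{\gC\cA}}(F,G)$ are not established. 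Second, and fatally for the full-faithfulness step, the suggested repair --- that ``freeness of $\gC\cA$'' makes $\epsilon^*F$ ``cofibrant enough'' --- does not work: cofibrancy of the simplicial \emph{category} $\gC\cA$ (it is a simplicial computad, hence cofibrant in the Bergner structure) does nothing to make the restricted \emph{diagram} $\epsilon^*F=F\epsilon$ projectively cofibrant in $\cat{sSet}^{\gC\cA}$, and mapping out of a non-cofibrant diagram into fibrant targets does not compute the derived mapping space. Were $\epsilon^*F$ cofibrant, the whole theorem would be nearly formal, which it is not. What the freeness of the resolution actually buys is different: the $n$-simplices of $\fun{Coh}(\cA,\Space)$ are indexed by $\gC(\cA\times[n])$, a resolution of $\cA\times[n]$, so the hom-spaces of this quasi-category are \emph{coherent} (fat) ends rather than the strict end $\Map_{\Space^{\gC\cA}}(\epsilon^*F,\epsilon^*G)$; showing that these compute derived mapping spaces --- equivalently, controlling the comparison $\gC(\cA\times\Delta^n)\to\gC\cA\times\gC[n]$, which you invoke but do not prove is an equivalence --- is exactly the technical core of \cite{CP-vogt}, not a formality to be quoted.

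For calibration: the paper itself does not prove this theorem, deferring to \cite{vogt,CP-vogt}, but the machinery it develops points to a cleaner route than yours. Corollary \ref{cor:hocoh-diagram-adjunction} gives the \emph{isomorphism} $\fun{Coh}(\cA,\Space)\cong\gN\Space^{\cA}$ (exponential in simplicial sets), which sidesteps your product comparison and your Kan-enrichment claim entirely; essential surjectivity is then the first proposition quoted from \cite[\S 2]{CP-maps} --- transporting a coherent diagram along componentwise homotopy equivalences yields an isomorphism in $\fun{Coh}(\cA,\eS)$ --- which also supplies the invertibility of componentwise equivalences that you assert without proof; and fullness/faithfulness is handled by the extension argument of \cite[\S 3]{CP-maps} rather than by a Quillen equivalence. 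Your model-categorical skeleton is a legitimate modern alternative --- the augmentation is indeed a Dwyer--Kan equivalence by the Proposition on free resolutions, so $\epsilon_!\dashv\epsilon^*$ is a Quillen equivalence of projective structures, and your rectification argument for essential surjectivity goes through once invertibility is cited --- but to close full faithfulness you would need the known (nontrivial) comparison between $\gN$ of a Kan-enriched category and the quasi-category presented by the projective model structure, not the cofibrancy claim. Two smaller corrections: your adjunction diagram has the functors pointing the wrong way ($\epsilon^*$ goes \emph{from} $\Space^\cA$ \emph{to} $\Space^{\gC\cA}$, with $\epsilon_!$ its left adjoint in the other direction, as your later usage in fact assumes); and $\Space^\cA$ as literally written carries no model structure, since the category of Kan complexes lacks colimits --- the projective structure lives on $\cat{sSet}$-valued diagrams, with the stated localization recovered on the pointwise-fibrant objects.
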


An equivalence of categories is in particular essentially surjective. Theorem \ref{thm:CPV} tells us that any homotopy coherent diagram of shape $\cA$ is equivalent, via a homotopy coherent natural transformation, to a strictly commutative diagram of shape $\cA$. This result answers the rectification problem for homotopy commutative diagrams.

\section{Homotopy coherent realization and the homotopy coherent nerve}\label{part:II}

Recall that a homotopy coherent diagram of shape $\cA$ is a simplicial functor indexed by a category $\gC\cA$ defined as a free resolution of $\cA$, a construction that will be reviewed momentarily. Explicitly, the data of such a diagram is comprised of objects $X_a$ for each object $a \in \cA$ plus maps of simplicial sets
\[ \gC\cA(a,b) \to \Map(X_a,X_b)\] for each pair of objects that are functorial in the sense of commuting with the composition functions:
\[ 
\begin{tikzcd}
\gC\cA(b,c) \times \gC\cA(a,b) \arrow[r, "\circ"] \arrow[d] & \gC\cA(a,c) \arrow[d] \\ \Map(X_b,X_c) \times \Map(X_a,X_b) \arrow[r, "\circ"] & \Map(X_a,X_c)
\end{tikzcd}
\]
 Previously we interpreted $X_a$ and $X_b$ as spaces, but this interpretation is actually not necessary. What we do need is for $\Map(X_a,X_b)$ to be a ``space,'' by which we mean a Kan complex,  because it is in these mapping spaces that we are defining homotopies (as 1-simplices) and higher homotopies (as higher simplices). So henceforth, we will extend our notion of \textbf{homotopy coherent diagram} to encompass any simplicial functor  $\gC\cA \to \eS$ whose codomain $\eS$ is a category enriched in Kan complexes.\footnote{Categories enriched in Kan complexes are called ``locally Kan'' in much of the literature.} Any topological category can be made into a category enriched in Kan complexes by applying the total singular complex functor (\ref{eq:singular-adjunction}) to its mapping spaces. In particular, the category $\Space$ becomes a Kan complex enriched category in this way, but there are many other examples as well.

Indeed, many (large) quasi-categories --- e.g., of spaces, of spectra, or whatever ---  originate as categories enriched in Kan complexes by Theorem \ref{thm:nerve-to-quasi} below. Our aim today is to explain how diagrams that are valued in quasi-categories that arise in this way are automatically homotopy coherent.

To justify this slogan, we offer a second perspective on the simplicial category $\gC\cA$ defined as the free resolution of a category $\cA$, explaining its relationship to the famous \emph{homotopy coherent nerve} functor. This work will also allow us to generalize the indexing shapes for homotopy coherent diagrams to encompass simplicial sets  which may or may not be nerves of categories. This will allow us to distinguish, e.g., between the ordinal category $[2] = 0 \to 1 \to 2$ and its generating reflexive directed graph $\Lambda^2_1$.

\subsection{Free resolutions are simplicial computads}

An arrow in a category is \textbf{atomic}  if it is not an identity and if it admits no non-trivial factorizations, i.e., if whenever $f=g \circ h$  then one or other of $g$ and $h$ is an identity. A category is \textbf{freely
    generated} by a reflexive directed graph of atomic arrows if and only if each of its
  non-identity arrows may be uniquely expressed as a composite of atomic arrows.\footnote{This is the case just when the category is in the essential image of the free category functor $F \colon \cat{rDirGph} \to \cat{Cat}$.} 
  
  The following definition is due to Dwyer and Kan \cite[4.5]{DK-simplicial} who used the terminology ``free simplicial categories.''\footnote{The reader familiar with model categorical intuition might find it helpful to note that the simplicial computads are precisely the cofibrant objects in the Bergner model structure on simplicially enriched categories; see \cite[\S 16.2]{riehl-cathtpy} for proof.}

\begin{defn}[simplicial computad]\label{defn:simplicial-computad}
A simplicial category $\cA$ is a \emph{simplicial computad} if and only if:
  \begin{itemize}
    \item each category $\cA_n$ of $n$-arrows is freely generated by the graph of atomic $n$-arrows
       \item if $f$ is an atomic $n$-arrow in $\cA_n$ and $\alpha\colon [m]\twoheadrightarrow[n]$ is an epimorphism in $\DDelta$ then the degenerated $m$-arrow $f\cdot\alpha$ is atomic in $\cA_m$.
  \end{itemize}
\end{defn}

\begin{lem} A simplicial category $\cA$ is a simplicial computad if and only if all of its non-identity arrows $f$ can be expressed uniquely as a composite 
\[
  f = (f_1 \cdot \alpha_1) \circ (f_2 \cdot \alpha_2) \circ \cdots \circ (f_\ell \cdot \alpha_\ell)
\]
in which each $f_i$ is non-degenerate and atomic and each $\alpha_i\in\DDelta$ is a degeneracy operator.
\end{lem}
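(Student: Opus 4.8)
The plan is to factor every arrow through two independent decompositions and show the computad axioms say precisely that these two decompositions are compatible. The first is the \emph{Eilenberg--Zilber decomposition}: for each pair of objects $x,y$ the hom-sets $\cA_n(x,y)$ assemble into a simplicial set $\cA(x,y)$ (the face and degeneracy functors are identity-on-objects, so they restrict to fixed source and target), and the Eilenberg--Zilber lemma writes each $n$-arrow $g$ uniquely as $g = g' \cdot \sigma$ with $g'$ non-degenerate and $\sigma$ a degeneracy operator. The second is the \emph{free decomposition}: the generation hypothesis on $\cA_n$ writes each non-identity $n$-arrow uniquely as a composite of atomic $n$-arrows. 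The bridging observation I would record first is that a degeneracy operator $\sigma \colon [m] \twoheadrightarrow [k]$ admits an order-preserving section $\iota$ with $\sigma\iota = \id$; since $\cA$ is functorial and identity-on-objects, the operation $(-)\cdot\sigma \colon \cA_k \to \cA_m$ is then a composition-preserving functor with retraction $(-)\cdot\iota$, so it is injective and \emph{reflects identities} (if $g\cdot\sigma$ is an identity then so is $g$). This is the technical fact that lets me move atomicity across a degeneracy.

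For the forward direction, I would take a non-identity $m$-arrow $f$ in a simplicial computad, use the first computad axiom to write $f = g_1 \circ \cdots \circ g_\ell$ uniquely with each $g_i$ atomic in $\cA_m$, and then apply Eilenberg--Zilber to each $g_i$, obtaining $g_i = f_i \cdot \alpha_i$ with $f_i$ non-degenerate and $\alpha_i$ a degeneracy. I would check $f_i$ is atomic: any nontrivial factorization $f_i = u \circ v$ would give $g_i = (u\cdot\alpha_i)\circ(v\cdot\alpha_i)$, and since $(-)\cdot\alpha_i$ reflects identities this would be a nontrivial factorization of the atomic $g_i$, a contradiction. This yields the desired expression. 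For uniqueness I would note that, by the second computad axiom, every factor $f_i\cdot\alpha_i$ is already atomic, so any expression of the stated form is an atomic factorization of $f$; the uniqueness of atomic factorizations (first axiom) matches the factors pairwise, and Eilenberg--Zilber uniqueness then matches the pairs $(f_i,\alpha_i)$.

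For the converse I would first isolate the sub-lemma that a single piece $g\cdot\sigma$, with $g$ non-degenerate atomic and $\sigma$ a degeneracy, is atomic: it is non-identity because $(-)\cdot\sigma$ reflects identities, and if $g\cdot\sigma = a\circ b$ nontrivially then concatenating the hypothesized factorizations of $a$ and $b$ produces a second factorization of $g\cdot\sigma$ of length $\geq 2$, contradicting uniqueness of the length-one expression $(g\cdot\sigma)$. With this in hand, the second computad axiom follows: given atomic $f$ and a degeneracy $\alpha$, write $f = f'\cdot\beta$ by Eilenberg--Zilber, deduce $f'$ atomic (again by reflecting identities), and observe $f\cdot\alpha = f'\cdot(\beta\alpha)$ with $\beta\alpha$ a surjection, which is atomic by the sub-lemma. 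The first axiom follows similarly: existence of an atomic factorization of a non-identity $m$-arrow is the hypothesized expression (whose factors are atomic by the sub-lemma), and for uniqueness I would Eilenberg--Zilber each factor of an arbitrary atomic factorization into canonical form, reducing it to an expression of the stated type and invoking the hypothesized uniqueness.

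The main obstacle I anticipate is not any single computation but the bookkeeping that keeps the two decompositions from interfering: the crux in both directions is the claim that the non-degenerate core of an atomic arrow is again atomic, which rests entirely on the fact that degeneracy operators reflect identities (via their sections), together with the care needed to confirm that Eilenberg--Zilber genuinely applies inside each mapping space $\cA(x,y)$. Once that reflection principle and the per-hom-space application of Eilenberg--Zilber are set up cleanly, both implications are symmetric combinations of the two uniqueness statements.
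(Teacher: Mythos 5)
Your proposal is correct and follows the same route as the paper, whose proof is simply the remark that the characterization follows from the definition together with the Eilenberg--Zilber lemma; you have supplied exactly the details that remark leaves implicit (in particular the key point that degeneracy operators, having sections, reflect identities, so the non-degenerate core of an atomic arrow is atomic and axiom (ii) transports atomicity along epimorphisms in $\DDelta$). Nothing in your argument deviates from or adds a gap to the paper's intended proof.
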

\begin{proof}
This characterization follows immediately from the definition by applying  the Eilenberg-Zilber lemma \cite[II.3.1, pp. 26-27]{GZ},  which says that any degenerate simplex in a simplicial set may be uniquely expressed as a degenerated image of a non-degenerate simplex.
\end{proof}

Free resolutions define simplicial computads, whose atomic $n$-arrows index the generating $n$-homotopies in a homotopy coherent diagram, such as enumerated for the homotopy coherent simplex in \S\ref{sec:shape}.

\begin{prop}\label{prop:resolution-computads} The free resolution $\gC\cA$ is a simplicial computad.
\end{prop}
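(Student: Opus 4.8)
The plan is to verify the two bullet points of Definition \ref{defn:simplicial-computad} directly from the combinatorial description of the $n$-arrows of $\gC\cA$ as strings of composable arrows of $\cA$ equipped with a balanced bracketing in which every arrow of $\cA$ is enclosed in exactly $n$ pairs of parentheses. The first task is to pin down the atomic $n$-arrows, which I claim are precisely those bracketed strings consisting of a single outermost pair of parentheses.

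For the first condition, I would simply observe that $\gC\cA_n = (FU)^{n+1}\cA = F\bigl(U(FU)^n\cA\bigr)$ is by construction a free category, namely the free category on the reflexive directed graph $\cG_n := U(FU)^n\cA$. In any free category $F\cG$, composition is concatenation of generating strings, so lengths add; hence a non-identity arrow admits no non-trivial factorization exactly when it is a single generating edge, and every non-identity arrow factors uniquely as a composite of such edges. Thus the atomic $n$-arrows of $\gC\cA$ are exactly the non-identity edges of $\cG_n$, i.e.\ the bracketed strings with a single outermost pair of parentheses, and $\gC\cA_n$ is freely generated by the graph of its atomic $n$-arrows. This is the content of the first bullet, and it is essentially automatic.

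The second condition is the real content. Since any epimorphism $\alpha\colon[m]\twoheadrightarrow[n]$ in $\DDelta$ factors as a composite of elementary codegeneracies, and $\gC\cA$ is contravariant, $f\cdot\alpha$ is obtained by applying a composite of elementary degeneracy maps $s_i\colon\gC\cA_n\to\gC\cA_{n+1}$; it therefore suffices to check that each $s_i$ carries an atomic $n$-arrow to an atomic $(n+1)$-arrow, after which the general case follows by composition. Recall that an elementary degeneracy either duplicates those pairs of parentheses nested inside exactly $k$ others, or (in the distinguished innermost case) inserts a new pair of parentheses around each individual arrow of $\cA$. Starting from an atomic arrow $(w)$ with a single outermost pair, duplicating any layer strictly inside the outer one or inserting the new innermost parentheses only alters the interior, replacing $(w)$ by $(w')$ and leaving a single outermost pair; duplicating the outermost layer sends $(w)$ to $\bigl((w)\bigr)$, which again has a single outermost pair. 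In every case the top-level word is never split into two, so atomicity is preserved.

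The only place demanding care — and the main potential obstacle — is the bookkeeping in this last step: one must confirm that \emph{no} elementary degeneracy ever breaks the outermost pair of an atomic arrow into two separate top-level generators. This is exactly the assertion that each degeneracy acts either by wrapping the whole arrow in one further pair of parentheses or by modifying the structure strictly inside an existing pair, neither of which concatenates generators at the outermost level. Granting this, composing the elementary facts along any factorization of $\alpha$ shows that $f\cdot\alpha$ is atomic, which together with the free-generation observation completes the verification that $\gC\cA$ is a simplicial computad.
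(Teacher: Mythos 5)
Your proposal is correct and takes essentially the same route as the paper's proof: identify the atomic $n$-arrows as the bracketed strings with a single outermost pair of parentheses, note that unique factorization is automatic since $\gC\cA_n = (FU)^{n+1}\cA$ is free with composition by concatenation, and observe that the degeneracy maps preserve atomics because they only double up parentheses or insert innermost ones. The only difference is one of detail: you spell out the reduction to elementary degeneracies and the case analysis (interior duplication, outermost duplication $(w)\mapsto\bigl((w)\bigr)$, innermost insertion) that the paper compresses into the single sentence ``since degeneracy arrows double up on parentheses, these preserve atomics as required.''
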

\begin{proof} Recall $\gC\cA$ is defined to be the free resolution of $\cA$, whose category of $n$-arrows is $(FU)^{n+1}\cA$. The category $FU\cA$ is the free category on the underlying reflexive directed graph of $\cA$. Its arrows are strings of composable non-identity arrows of $\cA$; the atomic 0-arrows are the non-identity arrows of $\cA$.  An $n$-arrow is a string of composable arrows in $\cA$ with each arrow in the string enclosed in exactly $n$ pairs of parentheses. The atomic $n$-arrows are those enclosed in precisely one pair of parentheses on the outside. Since composition in a free category is by concatenation, the unique factorization property is clear.  Since degeneracy arrows ``double up'' on parentheses, these preserve atomics as required.
\end{proof}

We will now do the homework assigned in Exercise \ref{exc:omega}.

\begin{ex}\label{ex:coherent-simplex} Recall the category
 \[
\begin{tikzcd}
\bbomega := & 0 \arrow[r] & 1 \arrow[r] & 2 \arrow[r] & 3 \arrow[r] & \cdots
\end{tikzcd}
\]
The free resolution $\gC\bbomega$ has objects $n \in \bbomega$. 
\begin{itemize}
\item A 0-arrow from $j$ to $k$ is a sequence of non-identity composable morphisms from $j$ to $k$, the data of which is uniquely determined by the objects being passed through. So $0$-arrows from $j$ to $k$ correspond to subsets \[ \{j,k\} \subset T^0 \subset [j,k]\] of the closed internal $[j,k] = \{t \in \bbomega \mid j \leq t \leq k\}$ containing both endpoints.
\item A 1-arrow from $j$ to $k$ is a once bracketed sequence of non-identity composable morphisms from $j$ to $k$. This data is specified by two nested subsets
\[ \{j,k\} \subset T^1 \subset T^0 \subset [j,k]\]
the larger one $T^0$ specifying the underlying unbracketed sequence and the smaller one $T^1$ specifying the placement of the brackets.\footnote{Note the face and degeneracy maps $\begin{tikzcd}[ampersand replacement=\&] (\gC\bbomega)_0 \arrow[r] \& (\gC\bbomega)_1\arrow[l, shift left=0.5em] \arrow[l, shift right=0.5em]  \end{tikzcd}$ are the obvious ones, either duplicating or omitting one of the sets $T^i$.}
\item A $n$-arrow from $j$ to $k$ is an $n$ times bracketed sequence of non-identity composable morphisms from $j$ to $k$, the data of which is specified by nested subsets
\[ \{j,k\} \subset T^n \subset \cdots \subset T^0 \subset [j,k]\] indicating the locations of all of the parentheses.\footnote{The nesting is because parenthezations should be ``well formed'' with open brackets closed in the reverse order to that in which they were opened.}
\end{itemize}

What then are the mapping spaces $\gC\bbomega(j,k)$? When $j > k$ they are empty and when $k=j$ or $k=j+1$ we have $\{j,k\} = [j,k]$ so $\gC\bbomega(j,k) \cong\Delta^0$ is comprised of a single point. For $k > j$, there are $k-j-1$ elements of $[j,k]$ excluding the endpoints and so we see that $\gC\bbomega(j,k)$ has $2^{k-j-1}$ vertices. The $n$-simplices of $\gC\bbomega(j,k)$ are given by specifying $n+1$ vertices --- each a subset $\{j,k\} \subset T^i \subset [j,k]$ ---  that respect the ordering of subsets relation. From this we see that 
\[ \gC\bbomega(j,k) \cong (\Delta^1)^{k-j-1},\footnote{More explicitly, this argument shows that the simplicial set $\gC\bbomega(j,k)$ is the nerve of the poset of subsets $\{j,k\} \subset T \subset [j,k]$ ordered by inclusion.}\] as displayed for instance in the case $j=0$ and $k=4$:
\[ \gC\bbomega(0,4) :=  \left(
\begin{tikzcd}
& {\{0,4\}} \arrow[dl] \arrow[rr] \arrow[dd] \arrow[dr, dotted] \arrow[dddl, dotted] \arrow[ddrr,dotted, bend left=10]  \arrow[dddr, dotted]& &  {\{0,1,4\}} \arrow[dd] \arrow[dl]  \arrow[dddl, dotted] \\ {\{0,3,4\}} \arrow[ddrr, bend left=10, dotted] \arrow[rr, crossing over] \arrow[dd] & & {\{0,1,3,4\}} \\ &  {\{0,2,4\}} \arrow[rr] \arrow[dl] \arrow[dr, dotted] &  & {\{0,1,2,4\}} \arrow[dl] \\ {\{0,2,3,4\}} \arrow[rr] & & {\{0,1,2,3,4\}} \arrow[from=uu, crossing over]
\end{tikzcd}
\right)
\]

The simplicial category $\gC\bbomega$ is a simplicial computad whose atomic $n$-arrows are those with a single outermost parenthezation: i.e., for which $T^n= \{j,k\}$. Geometrically these are all the simplices in the hom space cube $(\Delta^1)^{k-j-1}$ that contain the initial vertex $\{j,k\}$. 
\end{ex}

\subsection{Homotopy coherent realization and the homotopy coherent nerve}\label{sec:nerve}

Employing topological notation, we write $[n] \subset \bbomega$ for the full subcategory spanned by $0,\ldots, n$. 
\[
\begin{tikzcd}
{[n]} := & 0 \arrow[r] & 1 \arrow[r] & 2 \arrow[r] & 3 \arrow[r] & \cdots \arrow[r] & n
\end{tikzcd}
\]
 These categories define the objects of a diagram $\DDelta\hookrightarrow\cat{Cat}$ that is a full embedding: the only functors $[m] \to [n]$ are order-preserving maps from $[m] = \{0,\ldots, m\}$ to $[n] = \{0,\ldots, n\}$. Applying the free resolution construction to these categories we get a functor $\gC \colon \DDelta \to \cat{sCat}$ where $\gC[n]$ is the full simplicial subcategory of $\gC\bbomega$ spanned by those objects $0,\ldots,n$. In particular, its hom spaces are the simplicial cubes described in Example \ref{ex:coherent-simplex}.

\begin{defn}[homotopy coherent realization and nerve]\label{defn:hocoh-adj} The homotopy coherent nerve $\gN$ and homotopy coherent realization $\gC$ are the adjoint pair of functors obtained by applying Kan's construction \cite[1.5.1]{riehl-cathtpy} to the functor
    $\gC\colon \DDelta \to \cat{sCat}$ to construct an adjunction
    \[
\begin{tikzcd}
\cat{sSet} \arrow[r,bend left, "\gC"] \arrow[r, phantom, "\perp"]  & \cat{sCat} \arrow[l, bend left, "\gN"]
\end{tikzcd}
\]

The right adjoint,  called the \textbf{homotopy coherent nerve}, carries a simplicial category $\eS$ to the simplicial set $\gN\eS$ whose $n$-simplices are homotopy coherent diagrams of shape $[n]$ in $\eS$. That is  \[\gN \eS_n := \{ \gC[n] \to \eS\}.\]

The left adjoint is defined by pointwise left Kan extension along the
  Yoneda embedding of \ref{dig:sset}:
\[
\begin{tikzcd}
      {\DDelta}\arrow[rr, hook, "{\Delta^{\bullet}}"] \arrow[dr, "{\gC}"'] &
\arrow[d, phantom, "\cong" ] & {\cat{sSet}}\arrow[dl, dashed, "\gC"] \\
      & {\cat{sCat}} &
    \end{tikzcd}
\]
That is, $\gC\Delta^n$ is defined to be $\gC[n]$ --- a simplicial category that we call the \textbf{homotopy coherent $n$-simplex} --- and for a generic simplicial set $X$, $\gC{X}$ is defined to be a colimit of the homotopy coherent simplices indexed by the category of simplices of $X$.\footnote{Recall from Digression \ref{dig:sset} that the simplicial set $X$ is obtained by gluing in a $\Delta^n$ for each $n$-simplex $\Delta^n \to X$ of $X$. The functor $\gC$ preserves these colimits, so $\gC{X}$ is obtained by gluing in a $\gC[n]$ for each $n$-simplex of $X$.} 

Because of the formal similarity with the geometric realization functor of (\ref{eq:singular-adjunction}), another left adjoint defined by Kan's construction, we refer to $\gC$ as \textbf{homotopy coherent realization}.   
\end{defn}

Left Kan extensions are computed as colimits, providing a formula for the homotopy coherent realization $\gC{X}$ of a simplicial set $X$ as the colimit of a diagram of homotopy coherent simplices $\gC[n]$. However, this does not give very much insight into the mapping space of $\gC{X}$, colimits of simplicial categories being rather complicated. Work of Dugger and Spivak \cite{dugger-spivak}, redeveloped in \cite[\S 4]{RV-VI}, fills this gap; 
see also \cite[\S 16.2-16.4]{riehl-cathtpy}.

\begin{prop}[{\cite[4.4.7]{RV-VI}}]\label{prop:realization-computads} For any simplicial set $X$, its homotopy coherent realization $\gC{X}$ is a simplicial computad in which:
\begin{itemize}
\item objects $\ob\gC{X} = X_0$, the vertices of the simplicial set $X$
\item atomic $0$-arrows are non-degenerate 1-simplices of $X$, the source being the initial vertex and the target being the final vertex of the simplex
\item atomic $1$-arrows are the non-degenerate $k$-simplices of $X$ for $k > 1$, the source being the initial vertex and the target being the final vertex of the simplex
\item atomic $n$-arrows are pairs comprised of a non-degenerate $k$-simplex in $X$ for some $k > n$ together with a set of proper inclusions
\begin{equation}
\label{eq:flag} \{0,k\} = T^n \subsetneq T^{n-1} \subsetneq \cdots \subsetneq T^1 \subsetneq T^0 = [0,k]\end{equation}
the data of which defines an atomic $n$-arrow in $\gC\Delta^k$ from $0$ to $k$ that is not in the image of any of the face maps. This source of this $n$-arrow is the initial vertex of the $k$-simplex, while the target is the final vertex of the simplex.
\end{itemize}
\end{prop}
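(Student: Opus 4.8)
The plan is to leverage the fact that $\gC$ is a left adjoint (Definition~\ref{defn:hocoh-adj}), hence preserves all colimits, and to feed it the skeletal filtration of $X$, thereby reducing the entire statement to the single computation of the generating inclusion $\gC\partial\Delta^k\hookrightarrow\gC\Delta^k$. The objects are immediate: since $\ob\colon\cat{sCat}\to\cat{Set}$ is itself colimit-preserving and sends $\gC\Delta^n=\gC[n]$ to $\{0,\ldots,n\}=(\Delta^n)_0$ naturally, writing $X$ as the colimit of its simplices gives $\ob\gC{X}=\colim_{\Delta^n\to X}(\Delta^n)_0=X_0$.

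The heart of the argument is the analysis of $\gC\partial\Delta^k\hookrightarrow\gC\Delta^k$. Using the explicit description of $\gC\Delta^k=\gC[k]$ from Example~\ref{ex:coherent-simplex} together with Proposition~\ref{prop:resolution-computads}, its atomic $n$-arrows from $j$ to $l$ are the flags $\{j,l\}=T^n\subseteq\cdots\subseteq T^0\subseteq[j,l]$ with outermost set $T^n=\{j,l\}$. I would first observe that, because the mapping space $\gC[k](j,l)$ depends only on the interval $[j,l]$, every atomic arrow with $(j,l)\neq(0,k)$ already lies in a proper face (use the face omitting vertex $0$ if $j>0$, or the one omitting $k$ if $l<k$). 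For an atomic arrow from $0$ to $k$, the only face operators retaining both endpoints are the interior $\delta^i$ with $0<i<k$, and such an arrow lies in the image of $\gC\delta^i$ exactly when $i\notin T^0$ (since $T^j\subseteq T^0$, avoiding $i$ in $T^0$ forces it throughout). Hence the atomic arrows of $\gC[k]$ lying in \emph{no} proper face are precisely those from $0$ to $k$ with $T^0=[0,k]$; imposing non-degeneracy makes every inclusion strict, yielding exactly the flags of \eqref{eq:flag}, with $k>n$ forced since a chain of proper inclusions from $\{0,k\}$ to $[0,k]$ has length at most $k-1$. This exhibits $\gC\partial\Delta^k\hookrightarrow\gC\Delta^k$ as a relative simplicial computad inclusion whose adjoined generating atomic $n$-arrows are these flags.

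I would then assemble the global statement from the skeletal presentation $X=\colim_k\sk_k X$, in which each stage is a pushout of $\coprod\partial\Delta^k\hookrightarrow\coprod\Delta^k$ along the non-degenerate $k$-simplices of $X$. Invoking the structural theory of simplicial computads developed in \cite[\S4]{RV-VI}, building on \cite{dugger-spivak} (see also \cite[\S16.2--16.4]{riehl-cathtpy})---namely that relative simplicial computad inclusions are stable under pushout along arbitrary simplicial functors and under transfinite composition, and that along such inclusions the sets of generating atomic arrows simply accumulate---the colimit-preserving functor $\gC$ carries this presentation to a transfinite tower of relative computad inclusions starting from $\gC\emptyset=\emptyset$. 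Therefore $\gC{X}$ is a simplicial computad, and its atomic arrows form the disjoint union, over all non-degenerate simplices $\sigma\colon\Delta^k\to X$, of the new atomic arrows adjoined by $\gC\partial\Delta^k\hookrightarrow\gC\Delta^k$; transporting these along $\sigma$ (initial vertex to source, final vertex to target) reproduces the stated enumeration. The low-dimensional cases fall out automatically: only $k=1$ admits the flag $T^0=[0,k]=\{0,k\}$ with an atomic $0$-arrow, so atomic $0$-arrows are the non-degenerate $1$-simplices, while each $k>1$ contributes the single atomic $1$-arrow given by the unique flag $\{0,k\}\subsetneq[0,k]$.

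I expect the main obstacle to be the closure property invoked in the last paragraph. Colimits in $\cat{sCat}$ are genuinely delicate---mapping spaces do not glue naively, since adjoining free atomic arrows generates a proliferation of new composites---so the crux is the structural fact that this free generation interacts cleanly with pushouts, meaning the \emph{generating} atomic arrows are nonetheless preserved and simply add. Securing this, together with the injectivity of $\gC\partial\Delta^k\hookrightarrow\gC\Delta^k$ (which requires that distinct faces introduce no accidental coincidences among flags), is where the substantive work lies; everything else is bookkeeping of the flag combinatorics.
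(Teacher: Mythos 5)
Your proposal is correct and takes essentially the same route as the paper's proof sketch: an induction over the skeletal decomposition of $X$, reduced by colimit-preservation of $\gC$ to the direct verification that $\gC\partial\Delta^k\hookrightarrow\gC\Delta^k$ is a simplicial subcomputad inclusion, invoking closure of such inclusions under coproduct, pushout, and transfinite composition from \cite[\S 4]{RV-VI} and \cite[\S 16.2--16.4]{riehl-cathtpy}, exactly as the paper does. Your flag combinatorics (atomicity as $T^n=\{0,k\}$, membership in the image of $\gC\delta^i$ as $i\notin T^0$, and the cardinality count forcing $k>n$) correctly fills in details the paper's sketch leaves implicit, and you rightly locate the substantive work in the pushout-stability of subcomputad inclusions in $\cat{sCat}$.
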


Note that the description of atomic $n$-arrows subsumes those of the atomic 0-arrows and atomic 1-arrows. 
The data of a non-degenerate atomic $n$-arrow from $x$ to $y$ in $\gC{X}$ is given by a ``bead,'' that is a non-degenerate $k$-simplex in $X$ from $x$ to $y$, together with the additional data of a sequence of proper subset inclusions (\ref{eq:flag}), which Dugger and Spivak refer to as a ``flag of vertex data.'' 
Non-atomic $n$-arrows are then ``necklaces,'' that is strings of beads in $X$ joined head to tail, together with accompanying ``vertex data'' for each simplex.

\begin{proof}[Proof sketch]
This can be proven inductively using the skeletal decomposition of the simplicial set $X$ and will reveal that for any inclusion of simplicial sets $X \subset Y$, the functor $\gC{X} \hookrightarrow\gC{Y}$ of homotopy coherent realizations is a \textbf{simplicial subcomputad inclusion}: a functor of simplicial computads that is injective on objects and faithful and also preserves atomic arrows. This is proven by verifying directly that $\gC{\partial\Delta^k}\hookrightarrow\gC{\Delta^k}$ is a simplicial subcomputad inclusion and then arguing that such inclusions are closed under coproduct, pushout, and transfinite composition in simplicial categories. It follows that $\gC{X}$ is a simplicial computad, and moreover the analysis of what happens when attaching an $k$-simplex provides the description of atomic $n$-arrows given above. The data (\ref{eq:flag}) represents an $n$-arrow in $\gC\Delta^k(0,k)$ that is atomic (since $T^n=\{0,k\}$) and not contained in any face (since $T^0 = [0,k]$).
\end{proof}

Applying Kan's construction to the embedding $\DDelta \hookrightarrow\cat{Cat}$ of the ordinal categories  
yields an adjunction
    \[
\begin{tikzcd}
\cat{sSet} \arrow[r,bend left, "\Ho"] \arrow[r, phantom, "\perp"]  & \cat{Cat} \arrow[l, bend left, hook']
\end{tikzcd}
\]
  the right adjoint of which is called the \textbf{nerve} and
  the left adjoint of which, defined by pointwise left Kan extension along the
  Yoneda embedding:
\[
\begin{tikzcd}
      {\DDelta}\arrow[rr, hook, "{\Delta^{\bullet}}"] \arrow[dr, hook] &
\arrow[d, phantom, "\cong" ] & {\cat{sSet}}\arrow[dl, dashed, "\Ho"] \\
      & {\cat{Cat}} &
    \end{tikzcd}
\]
defines the homotopy category of a simplicial set, via a mild generalization of the construction introduced for quasi-categories at the end of Part \ref{part:I}. For a category $\cA$, an $n$-simplex in the nerve of $\cA$ is simply a functor $[n] \to \cA$, i.e., a string of $n$-composable morphisms in $\cA$. Note that by fullness of $\DDelta\hookrightarrow\cat{Cat}$, the nerve of the ordinal category $[n]$ is the standard $n$-simplex $\Delta^n$.

The nerve functor defines a fully faithful embedding $\cat{Cat}\hookrightarrow\cat{sSet}$ of categories into simplicial sets that lands in the subcategory spanned by the quasi-categories. In quasi-category theory, it is  very convenient to conflate a category with its nerve, which is why we have not introduced notation for this right adjoint.\footnote{Note that the nerve of the category $\cat{B}G$ with a single object and elements of the group $G$ as its endomorphisms is the Kan complex that typically goes by this name.}

With this convention, we now have two simplicial categories we have denoted $\gC\cA$ for a small category $\cA$:  the free resolution of $\cA$ and the homotopy coherent realization of the nerve of $\cA$. This would be confusing were these objects not naturally isomorphic:\footnote{Corollary \ref{cor:hocoh-diagram-adjunction}, which implies Theorem \ref{thm:resolution-equals-realization}, is stated without proof in \cite{CP-vogt}. The argument given here appears in \cite{riehl-necklace}, though it is highly probable that an earlier proof exists in the literature.}

\begin{thm}[{\cite[6.7]{riehl-necklace}}]\label{thm:resolution-equals-realization} For any category $\cA$, $\gC \cA \cong \gC\cA$. That is, its free resolution is naturally isomorphic to the homotopy coherent realization of its nerve.
\end{thm}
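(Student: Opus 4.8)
The plan is to build a canonical natural transformation and then prove it is an isomorphism by comparing the two simplicial computads one generator at a time. Writing $N\cA\in\cat{sSet}$ for the nerve of $\cA$, $\gC\cA$ for the free resolution, and $\gC(N\cA)$ for the homotopy coherent realization of $N\cA$, the comparison map is produced for free by the universal property of realization. Since $\gC$ is the left Kan extension of $\gC|_{\DDelta}\colon\DDelta\to\cat{sCat}$ along the Yoneda embedding, and $\gC\Delta^k=\gC[k]$ is by Definition \ref{defn:hocoh-adj} precisely the free resolution of the ordinal $[k]$, every simplex $\sigma\colon\Delta^k\to N\cA$ --- equivalently, by full faithfulness of the nerve, every functor $\sigma\colon[k]\to\cA$ --- yields a map $\gC[k]\to\gC\cA$ by functoriality of the free resolution. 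Because the free resolution is a functor, these are compatible as $\sigma$ ranges over the category of simplices of $N\cA$, so they assemble into a cocone and induce a simplicial functor $\theta_\cA\colon\gC(N\cA)=\colim_{\Delta^k\to N\cA}\gC[k]\to\gC\cA$, natural in $\cA$ and the identity on the common object set $\ob\cA=(N\cA)_0$. As $\theta_\cA$ is a simplicial functor by construction it automatically commutes with all face and degeneracy operators, so simpliciality and naturality are free, and the whole content of the theorem is the claim that $\theta_\cA$ is bijective on arrows in each degree.

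To verify this I reduce to the generating data. Both $\gC(N\cA)$ and $\gC\cA$ are simplicial computads, by Proposition \ref{prop:realization-computads} and Proposition \ref{prop:resolution-computads} respectively, so in each degree $n$ the categories $\gC(N\cA)_n$ and $\gC\cA_n=(FU)^{n+1}\cA$ are free on their graphs of atomic $n$-arrows. A functor of free categories that is the identity on objects and carries the generating graph of its source bijectively onto the generating graph of its target is an isomorphism; hence it suffices to show that $\theta_\cA$ restricts, in each degree, to a bijection on non-degenerate atomic $n$-arrows.

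This bijection is a matching of identical combinatorial data. By Proposition \ref{prop:realization-computads} with $X=N\cA$, a non-degenerate atomic $n$-arrow of $\gC(N\cA)$ from $x$ to $y$ is a non-degenerate $k$-simplex of $N\cA$ with initial vertex $x$ and final vertex $y$ --- a string $x=a_0\to a_1\to\cdots\to a_k=y$ of composable non-identity arrows of $\cA$ --- equipped with a flag $\{0,k\}=T^n\subsetneq\cdots\subsetneq T^0=[0,k]$. On the other side, unwinding the proof of Proposition \ref{prop:resolution-computads} exactly as in Example \ref{ex:coherent-simplex}, a non-degenerate atomic $n$-arrow of $\gC\cA$ from $x$ to $y$ is a string of composable non-identity arrows carrying a depth-$n$ parenthesization with a single outermost pair enclosing the whole string and no bracketing level repeated; recording at each level the positions where brackets sit encodes this as a flag $\{0,k\}=T^n\subsetneq\cdots\subsetneq T^0=[0,k]$, with $T^n=\{0,k\}$ the atomicity condition, the proper inclusions non-degeneracy, and $T^0=[0,k]$ recording that the full string is visible at the $0$-arrow level. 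Thus both atomic arrows are named by the same datum ``(non-degenerate simplex $=$ composable string) together with a flag,'' and tracing $\theta_\cA$ shows that on the subcategory spanned by a non-degenerate simplex $\sigma$ it sends the generator $(\sigma,T^\bullet)$ to the string $\sigma$ re-parenthesized according to $T^\bullet$, namely to the matching generator of $\gC\cA$.

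The one genuinely delicate point --- the heart of the argument --- is bookkeeping the colimit. One cannot simply invoke cocontinuity: the free resolution is built from the comonad $FU$, and $U$ is a right adjoint that does not preserve colimits, so the identification of $\gC\cA$ with the colimit of the $\gC[k]$ must be made by hand. The difficulty is that $\gC(N\cA)$ is glued from copies of $\gC[k]$ indexed by \emph{all} simplices, including degenerate ones and all faces, whereas the atomic $n$-arrows to be matched are indexed only by non-degenerate simplices. I would control this exactly as in the proof sketch of Proposition \ref{prop:realization-computads}: run through the skeletal decomposition of $N\cA$, using that each attachment $\gC(\partial\Delta^k)\hookrightarrow\gC(\Delta^k)$ is a simplicial subcomputad inclusion, to confirm that in the colimit the generators contributed by a degenerate simplex are identified with generators already present from lower-dimensional non-degenerate simplices --- this matches the collapse of a parenthesized string when an interior arrow is an identity --- so that no atomic arrow is created twice and none is lost. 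Granting this, $\theta_\cA$ is a degreewise bijection on atomic generators, hence a degreewise isomorphism of free categories, hence an isomorphism of simplicial computads, naturally in $\cA$, which is the asserted $\gC\cA\cong\gC(N\cA)$.
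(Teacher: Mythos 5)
Your proof is correct and takes essentially the same route as the paper: both sides are simplicial computads by Propositions \ref{prop:resolution-computads} and \ref{prop:realization-computads}, and the isomorphism is established by matching objects and non-degenerate atomic $n$-arrows --- bracketed strings of composable non-identity arrows on the free resolution side against non-degenerate simplices of the nerve equipped with flags on the realization side. Your explicit construction of the comparison functor $\theta_\cA$ via the colimit formula, and the formal reduction to a degreewise bijection on generators, just make precise what the paper leaves implicit, and the colimit bookkeeping you flag as delicate is exactly what Proposition \ref{prop:realization-computads} already packages.
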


\begin{rmk}
Note $\gC\Delta^n \cong \gC[n]$ is tautologous. The left Kan extension along the Yoneda embedding is defined so as to agree with $\gC \colon\DDelta \to \cat{sCat}$ on the subcategory of representables. Many arguments involving simplicial sets can be reduced to a check on representables, with the extension to the general case following formally by ``taking colimits.'' This result, however,  is not one of them since we are trying to prove something for all \emph{categories} and the embedding $\cat{Cat}\hookrightarrow\cat{sSet}$ does not preserve colimits.
\end{rmk}

\begin{proof}
Proposition \ref{prop:resolution-computads} and Proposition \ref{prop:realization-computads} reveal that both simplicial categories are simplicial computads. We will argue that they have the same objects and non-degenerate atomic $n$-arrows.

Both have $\ob\cA$ as objects, these being the vertices in the nerve of $\cA$. Atomic 0-arrows of the free resolution are morphisms in $\cA$; while atomic 0-arrows in the coherent realization are non-degenerate 1-simplices of the nerve --- these are the same thing. Atomic non-degenerate 1-arrows of the free resolution are sequences of at least two morphisms (enclosed in a single set of outer parentheses), while atomic 1-arrows of the coherent realization are non-degenerate simplices of dimension at least two --- again these are the same. Finally a non-degenerate atomic $n$-arrow is a sequence of $k$ composable morphisms with $(n-1)$ non-repeating bracketings; this non-degeneracy necessitates $k > n$. This data defines a $k$-simplex in the nerve together with a non-degenerate atomic $n$-arrow in $\gC[k](0,k)$, i.e., an atomic $n$-arrow in the coherent realization.
\end{proof}

Because the homotopy coherent realization was defined to be the left adjoint to the homotopy coherent nerve, it follows immediately:

\begin{cor}\label{cor:hocoh-diagram-adjunction} $\quad$
\begin{enumerate}
\item Homotopy coherent diagrams of shape $\cA$ in a simplicial category $\eS$ correspond to maps from the nerve of $\cA$ to the homotopy coherent nerve of $\eS$: i.e., there is a natural bijection between simplicial functors $\gC \cA \to \eS$  and simplicial maps $\cA \to \gN\eS$. 
\item Hence, the simplicial set $\fun{Coh}(\cA,\eS)$ is isomorphic to the simplicial set $\gN\eS^\cA$ defined using the internal hom in $\sSet$.
\end{enumerate}
\end{cor}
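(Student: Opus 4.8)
The plan is to deduce both parts directly from the adjunction $\gC \dashv \gN$ of Definition~\ref{defn:hocoh-adj} together with Theorem~\ref{thm:resolution-equals-realization}, which is precisely what reconciles the two meanings of the symbol $\gC$. For part (i), I would begin from the defining natural bijection
\[ \cat{sCat}(\gC X, \eS) \cong \cat{sSet}(X, \gN\eS) \]
supplied by the adjunction, valid for every simplicial set $X$ and every simplicial category $\eS$. Specializing to $X$ the nerve of $\cA$ produces a bijection between simplicial functors out of the homotopy coherent realization $\gC\cA$ and simplicial maps $\cA \to \gN\eS$. The only input beyond the adjunction is Theorem~\ref{thm:resolution-equals-realization}: it identifies the realization $\gC\cA$ of the nerve with the free resolution $\gC\cA$, so that the left-hand side is by definition the set of homotopy coherent diagrams of shape $\cA$ in $\eS$. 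Thus part (i) requires no computation beyond citing these two facts and unwinding the definition of a homotopy coherent diagram.

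For part (ii), the strategy is to compare $n$-simplices in each degree and then promote the comparison to an isomorphism of simplicial sets. By definition an $n$-simplex of $\fun{Coh}(\cA,\eS)$ is a simplicial functor $\gC(\cA \times [n]) \to \eS$ out of the free resolution of $\cA \times [n]$. Applying part (i) with $\cA \times [n]$ in place of $\cA$ identifies this set with $\cat{sSet}(\text{nerve of } \cA\times[n], \gN\eS)$. Since the nerve is a right adjoint it preserves products, so (conflating a category with its nerve) the nerve of $\cA \times [n]$ is $\cA \times \Delta^n$, where $\Delta^n$ is the nerve of $[n]$. Hence the $n$-simplices are identified with $\cat{sSet}(\cA \times \Delta^n, \gN\eS)$, which is exactly the set of $n$-simplices of the internal hom $\gN\eS^\cA$.

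The one point that is not purely formal is checking that these degreewise bijections commute with the simplicial operators, so that they assemble into an isomorphism of simplicial sets. On the $\fun{Coh}$ side the operators are induced by the cosimplicial category $[\bullet] \mapsto \cA \times [\bullet]$, and hence by the cosimplicial object $\gC(\cA \times [\bullet])$ of $\cat{sCat}$; on the internal-hom side they are induced by the cosimplicial object $\Delta^\bullet$. Matching the two reduces to the naturality of the adjunction bijection, the naturality in the categorical variable of the isomorphism of Theorem~\ref{thm:resolution-equals-realization}, and the naturality of the product-preservation of the nerve, all of which are available. I expect this bookkeeping of naturality to be the only (mild) obstacle: the entire conceptual content of the corollary is already packaged in the adjunction and in Theorem~\ref{thm:resolution-equals-realization}, which is why it can be asserted to follow immediately.
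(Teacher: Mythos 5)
Your proposal is correct and follows exactly the route the paper intends: the paper offers no separate argument beyond the remark that the corollary ``follows immediately'' from the adjunction $\gC \dashv \gN$ of Definition~\ref{defn:hocoh-adj} once Theorem~\ref{thm:resolution-equals-realization} identifies the free resolution with the realization of the nerve, and your write-up simply makes that deduction explicit. The extra details you supply --- the nerve preserving products as a right adjoint, and the naturality bookkeeping ensuring the degreewise bijections in part (ii) assemble into an isomorphism of simplicial sets --- are precisely the routine verifications the paper leaves tacit.
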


Note that the homotopy coherent realization functor is defined on all simplicial sets $X$. Extending previous terminology, we refer to a simplicial functor $\gC{X} \to \eS$ as a \textbf{homotopy coherent diagram of shape $X$ in $\eS$}.

The simplicial computad structure of Proposition \ref{prop:realization-computads} can also be used to prove the following important result:

\begin{thm}[{\cite[2.1]{CP-vogt}}]\label{thm:nerve-to-quasi}
 If $\eS$ is Kan complex enriched, then $\gN\eS$ is a quasi-category.
 \end{thm}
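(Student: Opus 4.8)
The plan is to convert the horn-filling problem into an extension problem for simplicial functors via the adjunction $\gC \dashv \gN$ of Corollary \ref{cor:hocoh-diagram-adjunction}, and then to solve that extension problem one mapping space at a time using the simplicial computad structure of Proposition \ref{prop:realization-computads}. Fix an inner horn with $0 < k < n$. A map $\Lambda^n_k \to \gN\eS$ transposes to a simplicial functor $\gC\Lambda^n_k \to \eS$, and a filler $\Delta^n \to \gN\eS$ transposes to an extension along the homotopy coherent realization $\gC\Lambda^n_k \hookrightarrow \gC\Delta^n$ of the inclusion $\Lambda^n_k \hookrightarrow \Delta^n$. So it suffices to show every such functor extends.

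First I would localize the problem. By Proposition \ref{prop:realization-computads} the inclusion $\gC\Lambda^n_k \hookrightarrow \gC\Delta^n$ is a simplicial subcomputad inclusion that is a bijection on objects. Moreover any arrow from $a$ to $b$ involves only simplices of $\Delta^n$ with minimal vertex $a$ and maximal vertex $b$, hence with all vertices in $\{a, \ldots, b\}$; since $\Lambda^n_k$ omits exactly the top simplex $\{0,\dots,n\}$ and the face $\{0,\dots,\hat k,\dots,n\}$ opposite $k$ --- both of which run from $0$ to $n$ because $0 < k < n$ --- the inclusion is an isomorphism on every mapping space except $\gC(0,n)$. Writing $x, y$ for the images of $0$ and $n$, the functor is thus already determined everywhere except on the new atomic arrows, all of which lie in $\gC(0,n)$ with source $x$ and target $y$. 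Because $0$ is initial and $n$ terminal in $[n]$ (so the only arrows into $0$ or out of $n$ in $\gC\Delta^n$ are identities, and every composite of lower arrows landing in $\gC(0,n)$ is already present in $\gC\Lambda^n_k$), these new atomic generators are subject to no composition constraints. Hence extending the functor is equivalent to extending the single simplicial map $\gC\Lambda^n_k(0,n) \to \eS(x,y)$ along the subcomplex inclusion $\gC\Lambda^n_k(0,n) \hookrightarrow \gC\Delta^n(0,n)$.

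The heart of the argument is to identify this inclusion. By Example \ref{ex:coherent-simplex}, $\gC\Delta^n(0,n) \cong (\Delta^1)^{n-1}$, the cube whose coordinates are indexed by the intermediate vertices $1, \ldots, n-1$; a vertex is a subset $\{0,n\} \subseteq T \subseteq \{0,\dots,n\}$. Unwinding Proposition \ref{prop:realization-computads}, the non-degenerate simplices of this cube missing from $\gC\Lambda^n_k(0,n)$ are exactly those chains whose minimum is the initial corner $\{0,n\}$ and whose maximum is either the terminal corner $\{0,\dots,n\}$ or the vertex $\{0,\dots,\hat k,\dots,n\}$ --- these being the atomic arrows whose bead is one of the two forbidden simplices. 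Now split off the $k$-th coordinate, writing $(\Delta^1)^{n-1} = \Delta^1 \times Q$ with $Q := (\Delta^1)^{n-2}$ the cube on the remaining coordinates. A direct check shows that a chain has minimum $\{0,n\}$ and maximum in $\{\{0,\dots,n\},\ \{0,\dots,\hat k,\dots,n\}\}$ precisely when it avoids both $\{1\}\times Q$ and $\Delta^1 \times \partial Q$; that is, the subcomplex $\gC\Lambda^n_k(0,n)$ is exactly
\[ (\{1\} \times Q) \cup (\Delta^1 \times \partial Q) \subseteq \Delta^1 \times Q, \]
and the inclusion we must lift against is the pushout--product of $\{1\} \hookrightarrow \Delta^1$ with $\partial Q \hookrightarrow Q$.

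I expect this combinatorial identification to be the main obstacle: one must verify carefully, using the Dugger--Spivak necklace description underlying Proposition \ref{prop:realization-computads}, that nothing else is missing and that the two product pieces assemble correctly (the endpoint case $n=2$ and the square case $n=3$ are good sanity checks). Granting it, the proof finishes formally. The inclusion $\{1\} \hookrightarrow \Delta^1$ is the horn $\Lambda^1_1 \hookrightarrow \Delta^1$, hence anodyne, and the pushout--product of an anodyne map with the monomorphism $\partial Q \hookrightarrow Q$ is again anodyne. Therefore $\gC\Lambda^n_k(0,n) \hookrightarrow \gC\Delta^n(0,n)$ is a trivial cofibration, so it has the left lifting property against the Kan fibration $\eS(x,y) \to \Delta^0$ --- which is a fibration precisely because $\eS$ is Kan complex enriched. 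The desired extension $\gC\Delta^n(0,n) \to \eS(x,y)$ exists, completing the filler and showing $\gN\eS$ is a quasi-category. It is worth noting that innerness ($0 < k < n$) enters exactly where it is needed: it guarantees both that $k$ is one of the cube's coordinates and that the omitted face runs from $0$ to $n$, both of which fail for the outer horns, matching the fact that outer horns need not be fillable.
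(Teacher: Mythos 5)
Your proof is correct and follows essentially the same route as the paper's: transpose along $\gC \dashv \gN$, reduce to the single lifting problem $\gC\Lambda^n_k(0,n) \hookrightarrow \gC\Delta^n(0,n)$ against the Kan complex $\Map(X_0,X_n)$, and recognize this inclusion as a cubical horn, which is anodyne and hence fillable. The only difference is that you make explicit the pushout--product identification $(\{1\}\times Q)\cup(\Delta^1\times\partial Q)\hookrightarrow\Delta^1\times Q$, verifying the combinatorics that the paper dispatches with ``one can similarly check that $\gC\Lambda^n_k(0,n)$ is a cubical horn.''
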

\begin{proof}
By adjunction, to extend along an inner horn inclusion $\Lambda^n_k\hookrightarrow\Delta^n$ mapping into the homotopy coherent nerve $\gN\eS$ is to  extend along simplicial subcomputad inclusions $\gC\Lambda^n_k\hookrightarrow\gC\Delta^n$ mapping into the Kan complex enriched category $\eS$. This is the simplicial subcomputad generated by all arrows whose beads are supported by simplices in $\Lambda^n_k\subset\Delta^n$. The only missing ones are in the mapping space from $0$ to $n$, so we are asked to solve a single lifting problem
\[
\begin{tikzcd}
\gC\Lambda^n_k(0,n) \arrow[r] \arrow[d, hook] & \Map(X_0,X_n) \\ \gC\Delta^n(0,n) \arrow[ur, dashed]
\end{tikzcd}
\] 
In Example \ref{ex:coherent-simplex}, we have seen that $\gC\Delta^n(0,n) \cong (\Delta^1)^{n-1}$ is a cube. One can similarly check that $\gC\Lambda^n_k(0,n)$ is a cubical horn. Cubical horn inclusions can be filled in the Kan complex $\Map(X_0,X_n)$, completing the proof.
\end{proof}

\begin{cor}\label{cor:coherent-diagram-quasi-category}
$\fun{Coh}(\cA,\eS) \cong \gN\eS^\cA$  is a quasi-category.
\end{cor}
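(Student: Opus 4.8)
The plan is to deduce this directly from the two results just established, reducing it to a general and purely simplicial fact. By part (ii) of Corollary \ref{cor:hocoh-diagram-adjunction} there is an isomorphism $\fun{Coh}(\cA,\eS) \cong \gN\eS^\cA$, in which the right-hand side is the internal hom (cotensor) in $\sSet$ from the nerve of $\cA$ into $\gN\eS$; and by Theorem \ref{thm:nerve-to-quasi} the simplicial set $\gN\eS$ is a quasi-category whenever $\eS$ is Kan complex enriched. So the statement follows once we know the general principle that \emph{the internal hom $Q^K$ from an arbitrary simplicial set $K$ into a quasi-category $Q$ is again a quasi-category}, applied with $K$ the nerve of $\cA$ and $Q = \gN\eS$.

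To establish this principle I would check the defining lifting property of a quasi-category: that $Q^K$ admits a filler for every inner horn $\Lambda^n_k \hookrightarrow \Delta^n$ with $0 < k < n$. Given such a lifting problem, the first step is to transpose it across the product--internal-hom adjunction $(-) \times K \dashv (-)^K$ in $\sSet$. The transposed problem asks for an extension of a map $\Lambda^n_k \times K \to Q$ to $\Delta^n \times K$, i.e.\ it asks $Q$ to lift against the inclusion
\[ \Lambda^n_k \times K \hookrightarrow \Delta^n \times K. \]
Because transposition is a bijection, a solution here is precisely a solution to the original problem, so it suffices to fill this inclusion in $Q$.

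The hard part will be recognizing that $Q$ can fill this more complicated inclusion; the key observation is that $\Lambda^n_k \times K \hookrightarrow \Delta^n \times K$ is the pushout-product (Leibniz product) of the inner horn inclusion $\Lambda^n_k \hookrightarrow \Delta^n$ with the monomorphism $\emptyset \hookrightarrow K$, which in this degenerate case reduces exactly to the displayed map. The essential input---which I would cite rather than reprove, as it is standard in the theory of quasi-categories (see \cite{riehl-cathtpy})---is that the class of \textbf{inner anodyne} maps, namely the saturation of the inner horn inclusions under pushout, transfinite composition, and retract, is closed under pushout-product with arbitrary monomorphisms. Granting this, the inclusion $\Lambda^n_k \times K \hookrightarrow \Delta^n \times K$ is inner anodyne. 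Since $Q$ is a quasi-category it has the right lifting property against every inner horn inclusion, hence against every map in their saturation, hence against this inclusion in particular; transposing the resulting lift back across the adjunction yields the required filler in $Q^K$ and completes the proof.
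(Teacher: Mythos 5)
Your proof is correct and takes essentially the same route as the paper: both establish the isomorphism $\fun{Coh}(\cA,\eS) \cong \gN\eS^\cA$ via the adjunction $\gC \dashv \gN$, invoke Theorem \ref{thm:nerve-to-quasi} for $\gN\eS$, and conclude because quasi-categories form an exponential ideal in $\sSet$. The only difference is that you unpack the exponential-ideal property into its standard proof --- transposing the inner horn across the product/hom adjunction and citing the closure of inner anodyne maps under pushout-products with monomorphisms --- whereas the paper cites that property as a black box.
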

\begin{proof}
By the adjunction of Definition \ref{defn:hocoh-adj}, a simplicial functor $\gC \cA \to \eS$ is the same as a simplicial map $\cA \to \gN\eS$. So $\fun{Coh}(\cA,\eS) \cong \gN\eS^\cA$ and since the quasi-categories define an exponential ideal in simplicial sets, the fact that $\gN\eS$ is a quasi-category implies that $\gN\eS^\cA$ is too.
\end{proof}

\begin{rmk}[all diagrams in homotopy coherent nerves are homotopy coherent]
This corollary explains that any map of simplicial sets $X \to \gN\eS$ transposes to define a simplicial functor $\gC{X} \to \eS$, a homotopy coherent diagram of shape $X$ in $\eS$. While not every quasi-category is isomorphic to a homotopy coherent nerve of a Kan complex enriched category, a deep theorem shows that every quasi-category is equivalent to a homotopy coherent nerve; one proof appears as \cite[7.2.22]{RV-VI}. This explains the slogan introduced at the beginning of this part, that all diagrams in quasi-categories are homotopy coherent.
\end{rmk}

%{\color{blue} somewhere mention my result that homs of $\cC[\cA]$ are 2-coskeletal so there is non-trivial atomic data in all dimensions; or not}

\subsection{Further applications}

Using similar techniques, where a homotopy coherent realization problem is transposed along the adjunction $\gC \dashv \gN$ to an extension problem in simplicial sets mapping into the homotopy coherent nerve, one can show:

\begin{prop}[{\cite[\S 2]{CP-maps}}] Given a homotopy coherent diagram $F \colon \gC\cA \to \eS$ in a locally Kan simplicial category and a family of homotopy equivalences $f_a \colon Fa \to Ga$ for all $a \in \cA$, there is a homotopy coherent diagram $G$ and coherent map $f \colon F \to G$ that moreover defines an isomorphism in $\cat{Coh}(\cA,\eS)$.
\end{prop}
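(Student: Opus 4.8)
The plan is to transpose the whole problem across the adjunction $\gC\dashv\gN$ of Definition \ref{defn:hocoh-adj} and solve it as an extension problem inside $\gN\eS$, which is a quasi-category by Theorem \ref{thm:nerve-to-quasi}. Writing $\cA$ also for its nerve, the coherent diagram $F$ transposes to a map $F\colon\cA\to\gN\eS$, and each homotopy equivalence $f_a\colon Fa\to Ga$ becomes a $1$-simplex $\{a\}\times\Delta^1\to\gN\eS$ that is an equivalence in $\gN\eS$ (the $1$-simplices of $\gN\eS$ which are equivalences are precisely the homotopy equivalences of $\eS$, since the homotopy category of $\gN\eS$ is the homotopy category of $\eS$). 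Using that nerves preserve products and Theorem \ref{thm:resolution-equals-realization}, a homotopy coherent natural transformation of shape $\cA$ is the same as a map $\cA\times\Delta^1\to\gN\eS$. Thus the given data assemble into a single map $\phi\colon B\to\gN\eS$ out of the subcomplex $B := (\cA\times\{0\})\cup(\sk_0\cA\times\Delta^1)\subset\cA\times\Delta^1$, restricting to $F$ on $\cA\times\{0\}$ and to $f_a$ on each $\{a\}\times\Delta^1$; note the objects $Ga$ are already pinned down as the codomains of the $f_a$. To produce $G$ and a coherent map $f\colon F\to G$ is \emph{exactly} to extend $\phi$ to a map $\cA\times\Delta^1\to\gN\eS$, whose restriction to $\cA\times\{1\}$ is $G$ and which as a whole is $f$.

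To build the extension I would filter $B\hookrightarrow\cA\times\Delta^1$ by attaching the nondegenerate simplices missing from $B$, equivalently filling the simplicial subcomputad inclusion $\gC B\hookrightarrow\gC(\cA\times\Delta^1)$ of Proposition \ref{prop:realization-computads} one atomic arrow at a time. Organized by dimension, the fillings are of two kinds. The purely coherent cells — the higher simplices witnessing associativity and naturality — reduce, exactly as in the proof of Theorem \ref{thm:nerve-to-quasi}, to lifting problems against (cubical) horn inclusions in the mapping spaces $\Map(Fa,Fb)$ of $\eS$, and these are solved for free because those mapping spaces are Kan complexes. The essential cells are the new generators encoding the action of $G$ on morphisms: to attach the arrow of $G$ over a generating morphism one must \emph{choose} a target and then fill a coherence cell whose remaining boundary is assembled from $F$ and the $f_a$. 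Here the boundary does not close up on the nose; the relevant lifting problem in a mapping space of $\eS$ is solvable precisely because pre- and post-composition with the equivalences $f_a$ induce homotopy equivalences of mapping spaces, so the required target datum exists up to the necessary homotopy. This is where the hypothesis that each $f_a$ is an equivalence is indispensable: conjugating the structure maps of $F$ by the $f_a$ and their homotopy inverses is what produces a coherent $G$ at all.

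The main obstacle is the bookkeeping in this second kind of filling: one must order the attachments, along the computad filtration, so that when a new generator of $G$ is added its entire boundary is already defined, and then check that the resulting lifting problem is against a map of mapping spaces that is a homotopy equivalence because it is built from composition with the $f_a$. The case $\cA=[1]$ already exhibits the mechanism. There $\gC(\cA\times\Delta^1)(00,11)$ is isomorphic to the horn $\Lambda^2_0$, a pair of edges out of a common vertex, on which $\phi$ is defined only at the single vertex $f_1\circ F(0\to 1)$; extending over it forces a choice of $G(0\to 1)$ with $G(0\to1)\circ f_0\simeq f_1\circ F(0\to1)$, which exists exactly because precomposition with the equivalence $f_0$ is surjective on path components. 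Finally, the ``moreover'' is formal: by Corollary \ref{cor:coherent-diagram-quasi-category} we have $\fun{Coh}(\cA,\eS)\cong\gN\eS^\cA$, and a $1$-simplex of a functor quasi-category is an equivalence if and only if each of its components is; the components of $f$ are its evaluations $f_a$ at the objects $a\in\cA$, which are equivalences by hypothesis, so $f$ is an isomorphism in $\fun{Coh}(\cA,\eS)$.
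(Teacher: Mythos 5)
Your proposal is correct and takes essentially the same route as the paper: transpose along $\gC \dashv \gN$ to the extension problem $\cA\times\{0\}\cup\sk_0\cA\times\Delta^1 \hookrightarrow \cA\times\Delta^1$ mapping into the quasi-category $\gN\eS$, solved cell by cell with the equivalence hypothesis entering exactly where you locate it (the paper reduces by colimits to $\cA = \Delta^n$ and defers the filling to Cordier--Porter). Your $\cA=[1]$ analysis of $\gC(\Delta^1\times\Delta^1)(00,11)\cong\Lambda^2_0$ and the pointwise criterion for the ``moreover'' clause merely flesh out details the paper leaves to the citation.
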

\begin{proof} By colimits reduce to the case $\cA=\Delta^n$ and construct the desired extension
\[
\begin{tikzcd}
\Delta^n \times \Lambda^1_1 \cup \sk_0\Delta^n \times \Delta^1 \arrow[d, tail] \arrow[r] & \gN\eS \\ \Delta^n \times \Delta^1 \arrow[ur, dashed]
\end{tikzcd}
\]
See \cite[\S 2]{CP-maps} for a very explicit description of what this filling process looks like in low dimensions. 
\end{proof}

\begin{prop}[{\cite[\S 3]{CP-maps}}] Given a homotopy coherent map $f \colon F \to G$ of homotopy coherent diagrams $F,G \colon \gC\cA \to \eS$ in a locally Kan simplicial category and a family of homotopies $f_a \simeq g_a \colon Fa \to Ga$ for all $a \in \cA$, there is a homotopy coherent map $g \colon F \to G$ extending these component maps together with a coherent homotopy of homotopy coherent maps $H \colon \gC(\cA\times [2]) \to \eS$.
\end{prop}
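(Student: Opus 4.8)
The plan is to transpose the entire problem across the adjunction $\gC \dashv \gN$ of Definition \ref{defn:hocoh-adj} and solve it by a single inner-anodyne filling against the quasi-category $\gN\eS$, just as in the preceding proposition but now with an \emph{inner} horn, which removes the need for any equivalence hypothesis. Recall from Corollary \ref{cor:hocoh-diagram-adjunction} together with Theorem \ref{thm:resolution-equals-realization} that a homotopy coherent diagram of shape $\cA \times [m]$ in $\eS$ is the same datum as a map of simplicial sets $\cA \times \Delta^m \to \gN\eS$ (conflating $\cA$ with its nerve). Under this dictionary the given coherent map $f \colon F \to G$ becomes a map $\hat f \colon \cA \times \Delta^1 \to \gN\eS$, and we must produce a $2$-simplex $\hat H \colon \cA \times \Delta^2 \to \gN\eS$ of $\fun{Coh}(\cA,\eS) \cong \gN\eS^\cA$ witnessing a homotopy between $f$ and the sought-after $g$.

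Following the picture of a homotopy in a quasi-category from the end of Part \ref{part:I}, I would arrange $\hat H$ so that its edge $01$ is $\hat f$, its edge $12$ is the degenerate (coherent identity) map on $G$, and its edge $02$ is the map $\hat g$ to be produced. On objects, the family of homotopies $f_a \simeq g_a$ supplies, for each $a$, a $2$-simplex of $\gN\eS$ with boundary $f_a$, $\id_{Ga}$, $g_a$; this is exactly the datum of $\hat H$ on $\sk_0\cA \times \Delta^2$, and on the overlap $\sk_0\cA \times \Lambda^2_1$ it restricts to $f_a$ on $01$ and $\id_{Ga}$ on $12$, hence is compatible with the edge data above. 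Thus the data already in hand assemble into a map out of the subcomplex
\[ \cA \times \Lambda^2_1 \;\cup\; \sk_0\cA \times \Delta^2 \;\hookrightarrow\; \cA \times \Delta^2 \]
(the edges $01$ and $12$ comprising the inner horn $\Lambda^2_1$), and any solution of this extension problem in $\gN\eS$ furnishes both $\hat g$ on the edge $02$ and the coherent homotopy $\hat H$; these transpose back to the desired $g$ (with components the prescribed $g_a$) and $H$.

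The crux is therefore to fill this inclusion against $\gN\eS$. The point is that it is precisely the pushout-product of the monomorphism $\sk_0\cA \hookrightarrow \cA$ with the inner horn inclusion $\Lambda^2_1 \hookrightarrow \Delta^2$. By the classical fact that the pushout-product of a monomorphism with an inner-anodyne map is again inner anodyne --- which reduces, via the skeletal filtration of $\sk_0\cA \hookrightarrow \cA$ by boundary inclusions, to the inner anodyne-ness of each pushout-product of $\partial\Delta^m \hookrightarrow \Delta^m$ with $\Lambda^2_1 \hookrightarrow \Delta^2$ --- the displayed inclusion is inner anodyne. Since $\eS$ is locally Kan, $\gN\eS$ is a quasi-category by Theorem \ref{thm:nerve-to-quasi} and so has the right lifting property against inner-anodyne maps, producing the filler. (Alternatively, as in the previous proposition, one may first reduce to the representable case $\cA = \Delta^n$ by a colimit argument and build the filler cell by cell.) The only genuine labor is the verification of this pushout-product lemma, which is standard; I expect no serious obstacle here, since the essential simplification over the previous proposition is that the relevant horn $\Lambda^2_1$ is \emph{inner}, so that no special-outer-horn or equivalence argument intervenes.
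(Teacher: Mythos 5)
Your proof is correct, and while it opens with the same move as the paper --- transposing along $\gC \dashv \gN$ to an extension problem over $\cA \times \Delta^2$ relative to the union of a horn cylinder with the vertex data --- it diverges in the choice of horn, and this is a genuine difference. The paper, following Cordier--Porter, solves the lifting problems $\Delta^n \times \Lambda^2_0 \cup \sk_0\Delta^n \times \Delta^2 \hookrightarrow \Delta^n \times \Delta^2$ inductively over the simplices of $\cA$: the horn $\Lambda^2_0$ is \emph{outer} (the degenerate edge sits on $01$ and the given $f$ on $02$, with $g$ produced on $12$), so these inclusions are not inner anodyne and the transposed fillings must be performed in the mapping spaces of $\eS$, using their Kan-ness at every stage of the induction. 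You instead place the degenerate edge of $G$ on $12$ and the given $f$ on $01$, so that the missing data lies over the \emph{inner} horn $\Lambda^2_1$; the resulting inclusion is exactly the pushout-product of the monomorphism $\sk_0\cA \hookrightarrow \cA$ with $\Lambda^2_1 \hookrightarrow \Delta^2$, hence inner anodyne by the standard stability lemma (Joyal; Lurie), and a single lift against the quasi-category $\gN\eS$ of Theorem \ref{thm:nerve-to-quasi} finishes everything at once, with no induction and no special-outer-horn argument. What your route buys is formality and generality: the only role of local Kan-ness is to guarantee that $\gN\eS$ is a quasi-category (plus, possibly, to reverse the vertex homotopies $f_a \simeq g_a$ so that they orient as $2$-simplices with $02$-edge $g_a$ --- a harmless step you elide, automatic in Kan complexes), so the argument would apply verbatim to any quasi-category target. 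What the paper's route buys is the explicit, dimension-by-dimension filling description of \cite{CP-maps} and a proof that is literally parallel to the preceding proposition, where the outer-horn/equivalence mechanism is unavoidable. One cosmetic remark: your $H$ has degenerate $12$-edge, which is a perfectly good witness of the homotopy $f \simeq g$ in $\fun{Coh}(\cA,\eS)$ as the statement requires, though note the paper's arrangement instead degenerates the $01$-edge.
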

\begin{proof}
The argument is analogous to the previous inductive one solving the lifting problems: 
\[
\begin{tikzcd}
\Delta^n \times \Lambda^2_0 \cup \sk_0\Delta^n \times \Delta^2 \arrow[d, tail] \arrow[r] & \gN\eS \\ \Delta^n \times \Delta^2 \arrow[ur, dashed]
\end{tikzcd} %\qedhere
\]
\end{proof}

\section{Homotopy coherent algebra}\label{part:III}

In \S\ref{sec:nerve}, we saw that the nerve of a 1-category $\cA$ defines the shape of a homotopy coherent diagram taking values in a quasi-category. In this final part, we will extend this principle, to argue that if $\mathbb{A}$ is a 2-category, then its ``local nerve'' (taking the nerve of the hom-categories to produce a simplicial category) defines the shape of homotopy coherent categorical structure in a quasi-categorically enriched category --- at least in the case where the local nerve of $\mathbb{A}$ happens to be a simplicial computad. We will pursue this line of thought in two particular examples, where $\mathbb{A}$ indexes a \emph{monad} or an \emph{adjunction}.

\subsection{From coherent homotopy theory to coherent category theory}

We have observed that categories of ``space-like'' objects are frequently enriched over \emph{Kan complexes}, simplicial sets in which any horn can be filled to a simplex. The 0-arrows of the mapping objects $\Map(X,Y)$ in such a category $\eS$ define morphisms $X \to Y$ in the underlying category of $\cS$. The Kan complex property implies that all $n$-arrows for $n > 0$ are ``invertible'' in a suitable sense, so we interpret an $n$-arrow in a mapping Kan complex $\Map(X,Y)$ as an $n$-homotopy, with the case $n=1$ defining ordinary homotopies between parallel morphisms $f,g \colon X \to Y$. The totality of the data of $\eS$ may be thought of as defining an $(\infty,1)$-\emph{category}, meaning a category with objects and morphisms in each dimension, all but the lowest of which are invertible.

There is another context in which we are used to having multiple dimensions of morphisms, namely category theory itself. Famously, the category $\cat{Cat}$ of ordinary categories and functors is a \emph{2-category}. Here the morphisms between morphisms are the natural transformations of Definition \ref{defn:nat-trans}.  As mentioned there, natural transformations are analogous to homotopies in the sense that they can be expressed as functors $H \colon \cC \times [1] \to \cD$ defined using the interval category $[1] = 0 \to 1$, but unlike homotopies natural transformations are not typically invertible, an important amount of extra flexibility.

Thus, the appropriate context for homotopy coherent category theory will be a category that is simplicially enriched but with two non-invertible dimensions of morphisms rather than just one. More precisely, a categorical context for homotopy coherent category theory is a simplicial category $\KK$ that is \emph{quasi-categorically enriched} as opposed to Kan complex enriched, in which case it is traditional to write $\Fun(X,Y)$ for the function complexes instead of $\Map(X,Y)$.

\begin{ex}\label{ex:inf-cats} The categories of quasi-categories, Segal categories, complete Segal spaces, and naturally marked simplicial sets (1-complicial sets) are all enriched over quasi-categories. 
\end{ex}

\begin{ex}\label{ex:fib-inf-cats}
Suitably defined categories of fibrations  (isofibrations, cartesian fibrations, cocartesian fibrations) of any of these over a fixed base are also enriched over quasi-categories.
\end{ex}

In each category just mentioned, the objects are a model of an $(\infty,1)$-category --- which, in deference to Lurie, most people call $\infty$-\emph{categories} --- or a fibered variant of the above. So if we develop homotopy coherent category theory in the context of any category enriched over quasi-categories, we are doing ``model independent $\infty$-category'' in a rather strict framework.

\begin{dig}[on model independent $\infty$-category theory]
Some people use ``model independent $\infty$-category theory'' to refer either to some sort of hand-wavy $\infty$-category theory or to something that is secretly quasi-category theory but presented in somewhat different language. The idea in both cases is that an expert could make everything precise. As an aesthetic and expository philosophy, this approach makes a lot of sense, but my concern at present is that there may be too few ``experts.''%\footnote{A quick test to gauge the level of expertise of an interlocutor is to ask whether they can  construct a model of the Yoneda embedding.}

A more conservative deployment of model independent $\infty$-category theory refers to mathematics that can be:
\begin{itemize}
\item specialized to the case of quasi-categories and so recover a theory that is compatible with the theory of Joyal and Lurie and
\item specialized to other models of $(\infty,1)$-categories and recover something equivalent to this in the sense that it is preserved and reflected by ``change of universe functors.''\footnote{The right Quillen equivalences between quasi-categories, Segal categories, complete Segal spaces, and 1-complicial sets established by Joyal and Tierney \cite{JT} all define \emph{biequivalences} of quasi-categorically enriched categories: functors that are surjective on objects up to equivalence and define a local equivalence of quasi-categories.}
\end{itemize}
This is the sense in which ``model independent $\infty$-category'' will be used here, referring to constructions and theorems that can be used for a variety of models of $(\infty,1)$-categories in exactly the same way in each instance and without relying upon any details of the models, except to know that each category of $\infty$-categories is enriched of quasi-categories.\footnote{For a considerably more detailed account of what it means to develop $\infty$-category theory ``model independently'' from this point of view see \cite{riehl-scratch, RV-elements}.}
\end{dig}

A final example of a quasi-categorically enriched category is worth mentioning:

\begin{ex}\label{ex:cat-cat} The category $\Cat$ of categories is self-enriched: for any pair of small category $\cC$ and $\cD$, we may define the category $\cD^\cC$ of functors and natural transformations. Passing to the nerve, this defines a quasi-categorically enriched category of categories, since nerves of ordinary categories are quasi-categories.
\end{ex}

\subsection{Homotopy coherent monads}

A \textbf{monad} on a category $\cB$ is a syntactic way of encoding ``algebraic structure'' that might be borne by objects in $\cB$. Various other mechanisms for describing finitary algebraic operations satisfying equations exist --- for instance operads or Lawvere theories --- but monads are able to capture more general varieties of algebraic structure.

\begin{defn}\label{defn:monad} A \textbf{monad} on $\cB$ is given by an endofunctor $T \colon \cB \to \cB$ together with a pair of natural transformations $\eta \colon \id_\cB \to T$ and $\mu \colon T^2 \to T$ so that the following ``associativity'' and ``unit'' diagrams commute.
\[
\begin{tikzcd}
T^3 \arrow[r, "T\mu"] \arrow[d, "\mu T"'] & T^2 \arrow[d, "\mu"] & T \arrow[r, "T\eta"] \arrow[dr, equals] & T^2 \arrow[d, "\mu"] & T \arrow[l, "\eta T"'] \arrow[dl, equals] \\ T^2 \arrow[r, "\mu"'] & T & & T
\end{tikzcd}
\]
\end{defn}

\begin{ex}\label{ex:mnd-from-operad} Let $\{ P_n\}_{n \in \NN}$ be a symmetric operad in sets. Then if $\cB$ has finite products and the colimits displayed below, the associated monad is defined for $b \in \cB$ by
\[ Tb := \sum_{n \in \NN} P_n \times_{\Sigma_n} b^n.\]
\end{ex}

We will review the notion of an \textbf{algebra} for a monad $T$ on $\cB$ in Definition \ref{defn:algebra}. For now,  it suffices to mention that the algebras for the monad construction in Example \ref{ex:mnd-from-operad} are equivalent to the algebras for the operad.

A monad, as just defined, is a diagram inside $\Cat$ whose image is comprised of 
\begin{itemize}
\item a single object, the category $\cB$ on which the monad acts, 
\item a $0$-arrow $T$, the monad endofunctor,
\item a pair of 1-arrows $\eta \colon \id_B \to T$ and $\mu \colon T^2 \to T$, the monad natural transformations,
\end{itemize}
satisfying the axioms of Definition \ref{defn:monad}.

Let us try to naively conjure the data of a \emph{homotopy coherent monad} before stating the full definition. That is, let us try to define a simplicial computad $\Mnd$ so that simplicial functors $\Mnd \to\KK$ valued in a quasi-categorically enriched category define a monad on an object in $\KK$.

Firstly, $\Mnd$ should have a single object, which we denote by $+$, whose image identifies the  object of $\KK$ on which the monad acts. Since $\Mnd$ has a single object we only need to describe the simplicial set $\Mnd(+,+)$ of endo-arrows. 

There is a single generating $0$-arrow $t$, whose image defines the endofunctor of the monad. Then the $0$-arrows are $t^n$ for all $n \geq 0$ with $t^0 = \id_+$.

Among the generating 1-arrows we should have $\eta \colon \id_+ \to t$ and $\mu \colon t^2 \to t$. But our intuition is that to define a ``homotopy coherent'' algebraic structure, we should avoid making unnecessary choices. This suggests that it would be better to have a generating $1$-arrow $\mu_n \colon t^n \to t$ for all $n \geq 0$, the $n$-ary multiplication map, where the case $\mu_1 = \id_t$ and $\mu_0$ we think of as the unit $\eta$. 

By ``horizontally'' composing the atomic $1$-arrows $\mu_{i_1},\ldots, \mu_{i_m}$ we obtain composite 1-arrows $t^{\sum_j i_j} \to t^m$ defined as follows:
\[
\begin{tikzcd}
{+} \arrow[r, bend left, "t^{i_1}"] \arrow[r, phantom, "\scriptstyle{\downarrow \mu_{i_1}}"] \arrow[r, bend right, "t"'] & {+} \arrow[r, bend left, "t^{i_2}"] \arrow[r, phantom, "\scriptstyle{\downarrow \mu_{i_2}}"] \arrow[r, bend right, "t"'] & {+} \arrow[r, phantom, "\cdots"]  &{+} \arrow[r, bend left, "t^{i_m}"] \arrow[r, phantom, "\scriptstyle{\downarrow \mu_{i_m}}"] \arrow[r, bend right, "t"'] & {+}
\end{tikzcd}
\]
Because each of the generating 1-arrows $\mu_n$ has codomain $t$, these composite $1$-arrows are uniquely determined by interpreting  the codomain $t^m$ as $m$ copies of $t$ each of which receives some map $\mu_i$. The data of the map $\mu[\alpha] \colon t^n \to t^m$ is then given by an order-preserving map $\alpha \colon \{0,\ldots, n-1\} \to \{0,\ldots, m-1\}$, which can be thought of as a specification of the cardinality of the fiber over each $j \in \{0,\ldots, m-1\}$. This gives a complete description of the 1-arrows in $\Mnd(+,+)$.

What 2-arrows should there be in $\Mnd(+,+)$? Associativity says that ``all maps from $t^n$ to $t$ should agree.''. In a homotopy coherent context, relations become data witnessed by arrows of the next dimension up. This suggests that for any $n,m \geq 0$ and any simplicial map $\alpha \colon \{0,\ldots, n-1\} \to \{0,\ldots, m-1\}$ we should have a 2-arrow with boundary
\[
\begin{tikzcd} & t^m \arrow[dr, "\mu_m"] \\ t^n \arrow[ur, "{\mu[\alpha]}"] \arrow[rr, "\mu_n"'] & & t 
\end{tikzcd}
\]
None of these relations implies the other so they should all be generating. The composite 2-arrows are then of the form
\[
\begin{tikzcd} & t^m \arrow[dr, "{\mu[\beta]}"] \\ t^n \arrow[ur, "{\mu[\alpha]}"] \arrow[rr, "{\mu[\gamma]}"'] & & t^k 
\end{tikzcd}
\]
for $k >1$ whenever $\gamma$ and $\beta \alpha$ define the same function $[n-1] \to [k-1]$. A similar description can be given for the $m$-arrows for $m \geq 3$.

So what is $\Mnd(+,+)$? It has 0-arrows indexed by natural numbers $n \geq 0$, 1-arrows corresponding to all order preserving functions, 2-arrows corresponding to composable pairs of order preserving functions, etc. So we see that $\Mnd(+,+)$ is isomorphic to the nerve of the category $\DDelta_+$  of finite ordinals and order-preserving maps.

\begin{defn}[the free homotopy coherent monad] Let $\Mnd$ denote the quasi-cat\-e\-gor\-ically enriched category with a single object $+$ and whose endo-hom quasi-category 
\[ \Mnd(+,+) := \DDelta_+\] is the nerve of the category of finite ordinals and order-preserving maps. Composition is given by the ordinal sum
\[ \begin{tikzcd}\Mnd(+,+) \times \Mnd(+,+) \arrow[r, "\oplus"] & \Mnd(+,+).\end{tikzcd}\]
\end{defn}

Ignoring the nerve, we can think of $\Mnd$ as a strict 2-category. It has a universal property that is well-known:

\begin{prop}\label{prop:monad-UP} 2-functors $\Mnd \to \Cat$ correspond to monads.
\end{prop}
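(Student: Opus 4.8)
Proposition \ref{prop:monad-UP} asserts that 2-functors $\Mnd \to \Cat$ correspond to monads. I need to show that the data of a strict 2-functor out of the 2-category $\Mnd$ (whose single hom-category is $\DDelta_+$, the category of finite ordinals and order-preserving maps, with composition given by ordinal sum) is exactly the data of a monad on a category, as in Definition \ref{defn:monad}.

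**The plan.** The strategy is to unwind what a 2-functor $M \colon \Mnd \to \Cat$ consists of and match it term-by-term against the monad axioms. First I would observe that since $\Mnd$ has a single object $+$, a 2-functor $M$ picks out a single category $\cB := M(+)$; this will be the category the monad acts on. Next, since $M$ must send the hom-category $\Mnd(+,+) = \DDelta_+$ to the hom-category $\Cat(\cB,\cB) = \cB^\cB$ of endofunctors and natural transformations, and must do so functorially and compatibly with horizontal composition, the entire content of $M$ is a \emph{strict monoidal functor} $(\DDelta_+, \oplus, [0]) \to (\cB^\cB, \circ, \id_\cB)$, where the monoidal structure on $\DDelta_+$ is ordinal sum and the monoidal structure on the endofunctor category is composition. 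So the proposition reduces to the classical fact that $\DDelta_+$ is the \emph{free strict monoidal category containing a monoid}, equivalently that strict monoidal functors out of $\DDelta_+$ classify monoid objects.

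**Key steps in order.** The heart of the argument is to exhibit the generating monoid structure in $\DDelta_+$ and check the correspondence. I would proceed as follows. The generating $0$-arrow $t \in \Mnd(+,+)$ is the object $[1] \in \DDelta_+$, so $T := M(t) = M[1]$ is the monad endofunctor, and $M$ on objects sends $[n]$ to $T^n$ (using $[n] = [1]\oplus\cdots\oplus[1]$ and strict monoidality, with $[0] \mapsto \id_\cB$). The two generating $1$-arrows of the monoid structure are the unique maps $\eta \colon [0] \to [1]$ and $\mu \colon [2] \to [1]$ in $\DDelta_+$; applying $M$ yields natural transformations $M\eta \colon \id_\cB \To T$ and $M\mu \colon T^2 \To T$. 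I would then verify that the \emph{simplicial identities} relating $\eta$ and $\mu$ inside $\DDelta_+$ are precisely the associativity and unit equations: the two composites $[3] \rightrightarrows [2] \to [1]$ built from $\mu$ witness associativity, and the two composites $[1] \rightrightarrows [2] \to [1]$ built from $\eta$ witness the unit laws, so functoriality of $M$ forces exactly the commuting diagrams of Definition \ref{defn:monad}. Conversely, given a monad $(T,\eta,\mu)$ on $\cB$, I would define $M$ on objects by $[n]\mapsto T^n$ and on a general order-preserving map $\alpha \colon [n]\to[m]$ by the natural transformation assembled from copies of $\mu$ and $\eta$ dictated by the fiber cardinalities of $\alpha$ (as described in the paragraph defining $\mu[\alpha]$ in the excerpt), and check this is well-defined and functorial.

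**The main obstacle.** The genuinely substantive step is the converse direction: verifying that the assignment $\alpha \mapsto M\alpha$ built from $\mu$'s and $\eta$'s is \emph{well-defined and functorial}, i.e.\ independent of how one decomposes $\alpha$ into generators and compatible with composition in $\DDelta_+$. This is exactly the content of the statement that $\DDelta_+$ is generated as a monoidal category by $[1]$ together with $\mu,\eta$ subject only to the associativity and unit relations — a coherence result. Rather than grinding through an explicit generators-and-relations presentation, I expect the cleanest route is to cite (or briefly rederive) the standard fact that $\DDelta_+ \simeq \Delta_+$ is the free monoidal category on a monoid object; functoriality then follows because any order-preserving map factors through generators in an essentially unique way up to the imposed relations, and those relations map to genuine equalities in $\cB^\cB$ by the monad axioms. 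The forward direction, by contrast, is a routine unwinding, so the weight of the proof lies in invoking this classical universal property of $\DDelta_+$ and confirming that its monoidal structure matches the horizontal-composition structure of $\Mnd$ described in the preceding discussion.
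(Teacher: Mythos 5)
Your proposal is correct and follows essentially the same route as the paper's proof: reduce a 2-functor $\Mnd \to \Cat$ to a strictly monoidal functor $\DDelta_+ \to \cB^\cB$ and then invoke the universal property of $\DDelta_+$ as the free strict monoidal category containing a monoid, noting that a monad on $\cB$ is a monoid in $\cB^\cB$. The paper states this in two sentences and treats the universal property as classical, whereas you additionally unwind the generators $\eta \colon [0] \to [1]$, $\mu \colon [2] \to [1]$ and the converse verification, which is a more detailed account of the same argument rather than a different one.
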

\begin{proof}
A 2-functor $\Mnd \to \Cat$ picks out a category $\cB$ as the image of $+$, and then defines a strictly monoidal functor $\Mnd(+,+)=\DDelta_+ \to \cB^\cB$. The category $\DDelta_+$ has a universal property: strictly monoidal functors out of $\DDelta_+$ correspond to monoids in the target, and a monad on $\cB$ is just a monoid in the category $\cB^\cB$ of endofunctors!
\end{proof}

Thus reassured, we may define a notion of a homotopy coherent monad acting on an object in a quasi-categorically enriched category $\KK$. On account of the examples listed in \ref{ex:inf-cats}, \ref{ex:fib-inf-cats}, and \ref{ex:cat-cat}, we might think of the objects in a quasi-categorically enriched category as being ``$\infty$-categories'' in some sense.

\begin{defn}[{\cite{RV-II}}]\label{defn:hocoh-monad} A \textbf{homotopy coherent monad} in a quasi-cat\-e\-gor\-ically enriched category $\KK$ is a simplicial functor $\Mnd \to \KK$ whose domain is the simplicial computad $\Mnd$. Explicitly, it picks out:
\begin{itemize}
\item an object $B \in \KK$.
\item a homotopy coherent diagram $\DDelta_+ \to \Fun(B,B)$ that is strictly monoidal with respect to composition. It sends the generating 0-arrow $t\colon + \to +$ to a 0-arrow $T \colon B \to B$ and identifies 1-arrows that assemble into a diagram
\[
\begin{tikzcd}
\id_B \arrow[r, "\eta" description] & T \arrow[r, shift left=0.75em, "\eta T"] \arrow[r, shift right=0.75em, "T\eta"'] & T^2 \arrow[l, "\mu" description] \arrow[r] \arrow[r, shift left=1.5em] \arrow[r, shift right=1.5em] & T^3 \arrow[l, shift left=0.75em] \arrow[l, shift right=0.75em] & \cdots
\end{tikzcd}
\]
\end{itemize}
\end{defn}

We interpret the simplicial functor $\DDelta_+ \to \Fun(B,B)$ defined by a homotopy coherent monad as being a homotopy coherent version of the monad resolution for $(T,\eta,\mu)$.

\subsection{Homotopy coherent adjunctions}

This definition of a homotopy coherent monad seems reasonable but are there any examples? One is given by the homotopy coherent monad on the large quasi-category $\prod_{\ob B}\qCat$ whose algebras are those $\ob{B}$-indexed families of quasi-categories that assemble into the fibers for a cartesian fibration over the quasi-category $B$ \cite{RV-IX}. This example arises from a familiar path. In traditional category theory, all monads arise from adjunctions. After reviewing the classical definitions, we will observe that up to homotopy adjunctions can be extended to homotopy coherent adjunctions, as defined below. Homotopy coherent adjunctions then restrict to define homotopy coherent monads. Moreover, monadic homotopy coherent adjunctions also recover the algebras for a homotopy coherent monad, to be introduced in \S\ref{sec:algebras}.

\begin{defn}\label{defn:adjunction} An \textbf{adjunction} in $\Cat$ is comprised of a pair of categories $\cA$ and $\cB$ together with functors $U\colon \cA \to \cB$ and $F \colon \cB \to \cA$ and natural transformations $\eta \colon \id_\cB \to UF$ and $\epsilon \colon FU \to \id_\cA$, called the \textbf{unit} and \textbf{counit} respectively, so that the diagrams
\begin{equation}\label{eq:triangles}
\begin{tikzcd}
& FUF\arrow[dr, "\epsilon F"] & & & UFU \arrow[dr, "U\epsilon"] \\ F \arrow[ur, "F\eta"] \arrow[rr, equals] & & F & U \arrow[ur, "\eta U"] \arrow[rr, equals] & & U
\end{tikzcd}
\end{equation}
commute.
\end{defn}

\begin{lem} Any adjunction induces a monad $(UF,\eta, U\epsilon F)$ on $\cB$.
\end{lem}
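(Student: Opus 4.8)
The plan is to verify directly that the data $(UF, \eta, U\epsilon F)$ satisfies the associativity and unit axioms of Definition \ref{defn:monad}, writing $T := UF$, $\mu := U\epsilon F$, and keeping $\eta$ as the unit of the adjunction. First I would record the two pieces of data: the unit $\eta \colon \id_\cB \to UF = T$ is already a natural transformation of the correct type, and the candidate multiplication $\mu = U\epsilon F \colon UFUF = T^2 \to UF = T$ is obtained by whiskering the counit $\epsilon \colon FU \to \id_\cA$ on the left by $U$ and on the right by $F$. Naturality of $\mu$ and $\eta$ is automatic since they are built from the given natural transformations of the adjunction.

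The main content is checking the three commuting diagrams. For associativity I must show $\mu \circ T\mu = \mu \circ \mu T \colon T^3 \to T$, which unwinds to
\[
U\epsilon F \circ UFU\epsilon F = U\epsilon F \circ U\epsilon FUF \colon UFUFUF \to UF.
\]
I would cancel the outer $U(-)F$ and reduce this to the interchange identity $\epsilon \circ FU\epsilon = \epsilon \circ \epsilon FU \colon FUFU \to \id_\cA$, which is precisely the naturality of $\epsilon \colon FU \to \id_\cA$ applied to the component $\epsilon$ itself (equivalently, the middle-four-interchange law for the whiskered composite). For the two unit axioms I must show $\mu \circ T\eta = \id_T = \mu \circ \eta T$. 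The left-hand identity $U\epsilon F \circ UF\eta = \id_{UF}$ follows by whiskering the first triangle identity $\epsilon F \circ F\eta = \id_F$ of \eqref{eq:triangles} on the left by $U$; the right-hand identity $U\epsilon F \circ \eta UF = \id_{UF}$ follows from the second triangle identity $U\epsilon \circ \eta U = \id_U$ of \eqref{eq:triangles} by whiskering on the right by $F$, after using naturality of $\eta$ to rewrite $\eta UF$ appropriately.

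The only genuine subtlety — and what I expect to be the main (though still routine) obstacle — is bookkeeping the whiskerings correctly: one must be careful that $\eta T = \eta UF$ versus $T\eta = UF\eta$ are distinguished and that the triangle identities are whiskered on the correct side to land in the right functor category. Once the reduction to the triangle identities and the naturality of $\epsilon$ is made explicit, each diagram commutes on the nose, so no homotopical input is needed here; this is the strict $1$-categorical prototype of the homotopy coherent monad of Definition \ref{defn:hocoh-monad}, and the coherence data there will later replace these equalities by higher cells.
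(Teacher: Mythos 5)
Your proposal is correct and is precisely the diagram chase the paper leaves as an exercise: the two unit laws are the triangle identities of \eqref{eq:triangles} whiskered by $U$ on the left and $F$ on the right respectively, and associativity reduces to the naturality square of $\epsilon$ evaluated at its own component, exactly as you say. One tiny simplification: the right unit law $U\epsilon F \circ \eta UF = \id_{UF}$ follows immediately by whiskering $U\epsilon \circ \eta U = \id_U$ on the right by $F$, since $(\eta U)F = \eta UF$ on the nose, so no appeal to naturality of $\eta$ is needed.
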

\begin{proof}
An exercise in diagram chasing.
\end{proof}

There is a free-living 2-category $\Adj$ containing an adjunction in the sense of a universal property analogous to Proposition \ref{prop:monad-UP}. It has two objects $+$ and $-$ and the four hom-categories displayed:
\[
\begin{tikzcd}[column sep=large]
+ \arrow[r, bend left, "\DDelta_\bot\cong\DDelta_\top^\op"] \arrow[loop left, "\DDelta_+"] \arrow[r, phantom, "\perp"] & - \arrow[loop right, "\DDelta_+^\op"] \arrow[l, bend left, "\DDelta_\top\cong\DDelta_\bot^\op"]
\end{tikzcd}
\]
Here $\DDelta_\top, \DDelta_\bot \subset \DDelta \subset \DDelta_+$ are the subcategories of order-preserving maps that preserve the top or bottom elements, respectively, in each ordinal. The composition maps in $\Adj$ are all restrictions of the ordinal sum operation.

\begin{prop}[{\cite{SS}}] 2-functors $\Adj \to \Cat$ correspond to adjunctions in $\Cat$.
\end{prop}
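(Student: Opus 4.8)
The plan is to establish the universal property of $\Adj$ in direct analogy with the monad case of Proposition \ref{prop:monad-UP}, exhibiting a 2-functor $\Adj \to \Cat$ as precisely the data of an adjunction together with higher coherence that turns out to be uniquely forced. Accordingly I would prove the correspondence by constructing the two directions separately and checking they are mutually inverse.

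First I would unpack the data of a 2-functor $S \colon \Adj \to \Cat$. It assigns categories $\cB := S(+)$ and $\cA := S(-)$, and on hom-categories it defines functors compatible with the ordinal-sum composition. Restricting $S$ to the full sub-2-category on the object $+$ --- which by construction has endo-hom $\DDelta_+$ and is thus exactly the 2-category $\Mnd$ --- and invoking Proposition \ref{prop:monad-UP} produces a monad on $\cB$; dually, restricting to $-$, where the hom is $\DDelta_+^\op$, produces a comonad on $\cA$. The generating 1-cells of the off-diagonal homs $\Adj(+,-) = \DDelta_{-\infty}$ and $\Adj(-,+) = \DDelta_\infty$ are sent to functors $F \colon \cB \to \cA$ and $U \colon \cA \to \cB$, and the generating 2-cells to natural transformations $\eta \colon \id_\cB \to UF$ and $\epsilon \colon FU \to \id_\cA$. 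Functoriality of $S$ with respect to ordinal-sum composition then forces $UF$ and $FU$ to be the monad and comonad extracted above and, crucially, identifies the images of the two relations that hold in $\Adj$ with the triangle identities \eqref{eq:triangles}. This yields the forward map from 2-functors to adjunctions.

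Conversely, given an adjunction $(F,U,\eta,\epsilon)$, I would construct a 2-functor $S$ and argue it is the \emph{unique} extension of this prescribed data. The object assignment and the action on generating 1- and 2-cells are determined; the content is that this consistently determines $S$ on all cells. Here I would exploit the combinatorial description of the homs: every object of $\DDelta_+$ is an iterated ordinal sum of the generating ordinal, corresponding to an alternating composite such as $(UF)^n$, $(FU)^n$, $U(FU)^n$, or $F(UF)^n$, and every morphism factors --- via the epi--mono (Eilenberg--Zilber) factorization in $\DDelta$ --- into cofaces and codegeneracies, which translate into whiskered instances of the unit $\eta$ and counit $\epsilon$. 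Defining $S$ on these generators by the corresponding whiskerings, the simplicial identities among faces and degeneracies become exactly the naturality squares and triangle identities available in any adjunction, so $S$ is well-defined and preserves composition.

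The main obstacle is the combinatorial heart of this argument: verifying that the hom-categories $\DDelta_+$, $\DDelta_\infty$, $\DDelta_{-\infty}$, under ordinal sum, are \emph{freely} generated by $F$, $U$, $\eta$, $\epsilon$ subject to precisely the two triangle identities --- equivalently, that the normal form for morphisms produced by the Eilenberg--Zilber factorization exactly matches a normal form for formal composites of $\eta$ and $\epsilon$ modulo the triangle relations. This is the Schanuel--Street theorem \cite{SS}; rather than reprove it, I would cite it, observing that it is the two-object analogue of the monoid/simplex-category universal property already invoked for $\Mnd$, and that it supplies both the existence and the uniqueness of the extension constructed above, thereby completing the bijection.
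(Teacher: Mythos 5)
Your proposal is correct and matches the paper's treatment: the paper states this proposition with no proof beyond the citation to \cite{SS}, and your argument likewise defers the essential combinatorial content --- that the hom-categories $\Adj(+,+)=\DDelta_+$, $\Adj(-,+)=\DDelta_{\infty}$, $\Adj(+,-)=\DDelta_{-\infty}$ under ordinal sum are freely generated by $f$, $u$, $\eta$, $\epsilon$ modulo the triangle identities \eqref{eq:triangles} --- to Schanuel--Street, while your routine unpacking (restriction along $\Mnd\hookrightarrow\Adj$ recovering Proposition \ref{prop:monad-UP}, the dual comonad at $-$, and the Eilenberg--Zilber factorization translating cofaces and codegeneracies into whiskerings of $\eta$ and $\epsilon$) is accurate. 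Since \cite{SS} \emph{is} this proposition, your writeup is properly read as an expository reduction to that reference rather than an independent proof, which is exactly the paper's stance.
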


We saw in Definition \ref{defn:hocoh-monad} that the free homotopy coherent monad $\Mnd$ is in fact the free 2-category containing a monad: when this 2-category is regarded as a simplicial category by identifying its hom-category $\DDelta_+$ with its nerve, this simplicial category turns out to be a simplicial computad whose atomic $n$-arrows are those $n$-simplices whose final vertex is the 0-arrow $t$. The following result tells us that the same is true for adjunctions: the 2-category $\Adj$, when regarded as a simplicial category, is a simplicial computad that defines the \textbf{free homotopy coherent adjunction}. Moreover, we present a convenient graphical description of its $n$-arrows that establishes this simplicial computad structure.

\begin{prop}[{\cite{RV-II}}]  The 2-category $\Adj$, when regarded as a simplicial category via the nerve, is a simplicial computad with:
\begin{itemize}
\item two objects $+$ and $-$
\item two atomic $0$-arrows $f \colon + \to -$ and $u \colon - \to +$
\item $n$-arrows given by strictly undulating squiggles on $(n+1)$-lines that start and end in the regions labelled ``$-$'' or ``$+$''
\begin{center}
{\includegraphics[width=1.5in]{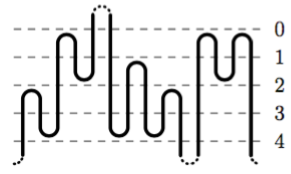}}
\end{center}
that are atomic if and only if there are no ``instances of $+$ or $-$'' in their interiors.
\end{itemize}
\end{prop}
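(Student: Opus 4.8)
The plan is to prove all three assertions at once by setting up an explicit graphical calculus, the \emph{strictly undulating squiggles}, that encodes the $n$-arrows of $\Adj$, and then reading the computad axioms of Definition \ref{defn:simplicial-computad} directly off the pictures. First I would recall the concrete presentation of the $2$-category $\Adj$: its $1$-cells are the alternating words in the two generators $f \colon + \to -$ and $u \colon - \to +$, so that the endo-$1$-cells of $+$ are the powers $(uf)^k$ and those of $-$ are the powers $(fu)^k$, while the $1$-cells $+ \to -$ and $- \to +$ begin and end with the appropriate generator. Matching $(uf)^k$ with the ordinal $[k-1] \in \DDelta_+$ and recording $2$-cells as order-preserving maps reproduces the four hom-categories displayed above — $\Adj(+,+) \cong \DDelta_+$, $\Adj(-,-) \cong \DDelta_+^\op$, $\Adj(+,-) \cong \DDelta_{-\infty}$, $\Adj(-,+) \cong \DDelta_\infty$ — with horizontal composition given by ordinal sum. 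Since an $n$-arrow is by definition an $n$-simplex in the nerve of one of these categories, that is a string of $n$ composable order-preserving maps, the crux of the argument is a natural bijection between such strings and strictly undulating squiggles drawn on $n+1$ lines whose two endpoints sit on the objects $+$ and $-$.

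The key setup step is to make this dictionary precise and to check its compatibility with all of the structure. A squiggle is read as a path that threads the $n+1$ horizontal lines, recording at each line the corresponding $1$-cell in the composable string, with the $n$ transitions between consecutive lines encoding the $n$ generating $2$-cells $\eta$ and $\epsilon$; the \emph{strictly undulating} condition is exactly the requirement that the path alternately rises and falls, which is forced by the alternation of $f$ and $u$ and hence of the two objects. In this language horizontal composition in $\Adj$ — the ordinal sum — becomes concatenation of squiggles end-to-end along a shared boundary object, the $i$-th face operator deletes the $i$-th line (composing the two adjacent $2$-cells), and the $i$-th degeneracy duplicates the $i$-th line (inserting an identity $2$-cell). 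The instances of $+$ and $-$ appearing along a squiggle mark precisely the points at which the path passes through an object, equivalently the points at which a horizontal factorization through that object is possible.

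With the dictionary in hand, the two computad axioms are immediate, in exact parallel with the $\Mnd$ computation recalled above. For the first axiom I would observe that any squiggle may be cut at each of its interior instances of $+$ or $-$, and that this exhibits every $n$-arrow as a horizontal composite of squiggles having no interior object-instances; because the cut points are canonically determined by the picture, this factorization is unique, so each category $\Adj_n$ is freely generated by the graph of those indecomposable squiggles. These indecomposables are by construction exactly the squiggles with no $+$ or $-$ in their interiors, which establishes the stated atomicity criterion and, specializing to $n = 0$, identifies the atomic $0$-arrows as the single generators $f$ and $u$. For the second axiom, a degeneracy $f \cdot \alpha$ along an epimorphism $\alpha \colon [m] \twoheadrightarrow [n]$ merely duplicates lines of the squiggle $f$; since doubling a line neither creates nor destroys a passage through an object, no new interior instance of $+$ or $-$ is introduced, and an atomic squiggle remains atomic.

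The main obstacle is the setup step rather than the computad verification: constructing the squiggle calculus so that it is a genuine bijection onto the $n$-simplices of the four hom-nerves, and checking that ordinal-sum composition together with every face and degeneracy operator is faithfully mirrored by the pictorial operations, is a delicate piece of combinatorial bookkeeping. Once that correspondence is nailed down, the assertions about objects, atomic $0$-arrows, and the atomicity of squiggles are essentially read off the diagrams, exactly as the analogous statements for $\Mnd$ were.
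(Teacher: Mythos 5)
Your proposal is correct and follows essentially the same route as the paper's proof, which likewise identifies the hom simplicial sets with squiggles read between the appropriate boundary labels, lets the face and degeneracy operators act by removing and duplicating lines, and takes composition to be horizontal juxtaposition, from which the computad structure (unique factorization by cutting at interior instances of $+$ and $-$, and preservation of atomics under degeneracies) is read off. The paper merely compresses into one sentence the dictionary-verification step that you rightly flag as the real combinatorial content, deferring those details to \cite{RV-II}.
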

\begin{proof} An $n$-arrow lies in $\Adj(-,+)$ if starts in the space labeled $-$ on the right and ends in the space labeled + on the left; the description of the other hom simplicial sets is similar. The face and degeneracy maps act on the simplicial sets $\Adj(+,+)$, $\Adj(-,+)$, $\Adj(+,-)$, and $\Adj(+,+)$ by removing and duplicating lines. Composition is by horizontal juxtaposition, which makes the simplicial computad structure clear.
\end{proof}

\begin{rmk} Note this gives a graphical calculus on the full subcategory $\Mnd\hookrightarrow\Adj$. The $n$-arrows are strictly undulating squiggles on $(n+1)$-lines that start and end at the space labeled $+$; these are atomic if and only if there are no instances of $+$ in their interiors. This condition implies that if all the lines are removed except the bottom one, a process that computes the final vertex of the $n$-simplex, the resulting squiggle looks like a single hump over one line, which is the graphical representation of the 0-arrow $t$. Because atomic arrows in $\Mnd$ may pass through $-$,  $\Mnd\hookrightarrow\Adj$ is not a subcomputad inclusion.
\end{rmk}

\begin{defn} A \textbf{homotopy coherent adjunction} in a quasi-categorically enriched category $\KK$ is a simplicial functor $\Adj \to \KK$. Explicitly, it picks out:
\begin{itemize}
\item a pair of objects $A, B \in \KK$.
\item homotopy coherent diagrams \[\DDelta_+ \to \Fun(B,B),\quad \DDelta_+^\op \to \Fun(A,A),\quad \DDelta_\top \to \Fun(A,B),\quad \DDelta_\top^\op \to \Fun(B,A)\] that are functorial with respect to the composition action of $\Adj$. 
\end{itemize}
The 0- and 1-dimensional data of the first and third of these may be depicted as follows
\[
\begin{tikzcd}[column sep=large]
\id_B \arrow[r, "\eta" description] & UF \arrow[r, shift left=0.75em, "\eta UF"] \arrow[r, shift right=0.75em, "UF\eta"'] & UFUF \arrow[l, "U\epsilon F" description] \arrow[r] \arrow[r, shift left=1.5em] \arrow[r, shift right=1.5em] & UFUFUF \arrow[l, shift left=0.75em] \arrow[l, shift right=0.75em] & \cdots
\end{tikzcd}
\]
\[
\begin{tikzcd}[column sep=large]
U \arrow[r, shift left=0.25em, "\eta U"] & UFU \arrow[l, shift left=0.25em, "U \epsilon"] \arrow[r, shift right=0.5em, "UF\eta" description] \arrow[r, shift left=1.25em, "\eta UF"] & UFUFU \arrow[l, shift right=0.5em, "U\epsilon F" description] \arrow[l, shift left=1.25em, "UFU\epsilon"]    \arrow[r, shift left=0.3em] \arrow[r, shift left=1.5em] \arrow[r, shift right=0.9em] & UFUFUFU   \arrow[l, shift left=0.3em] \arrow[l, shift left=1.5em] \arrow[l, shift right=0.9em]  & \cdots
\end{tikzcd}
\]
the remaining two diagrams being dual. We interpret the homotopy coherent diagrams $\DDelta_+ \to \Fun(B,B)$, $\DDelta_+^\op \to \Fun(A,A)$, $\DDelta_\top \to \Fun(A,B)$, and $\DDelta_\top^\op \to \Fun(B,A)$ as
defining homotopy coherent versions of the bar and cobar resolutions of the adjunction $(F,U, \eta,\epsilon)$.
\end{defn}

Any homotopy coherent adjunction has an underlying adjunction in the following sense. A quasi-categorically enriched category $\KK$ has an associated \textbf{homotopy 2-category} defined by applying the homotopy category functor $\Ho$ to each hom quasi-category. Now, an \textbf{adjunction in a quasi-categorically enriched category} is an adjunction the homotopy 2-category obtained by taking the hom-categories of the function complexes. Explicitly, an adjunction is given by:
\begin{itemize}
\item a pair of objects $A$ and $B$,
\item a pair of 0-arrows $U \colon A \to B$ and $F \colon B \to A$,
\item a pair of 1-arrows $\eta \colon \id_B \to UF$ and $\epsilon \colon FU \to \id_A$
\end{itemize}
so that there exist 2-arrows whose boundaries have the form displayed in (\ref{eq:triangles}).

The upshot is that an adjunction in a quasi-categorically enriched category is not so hard to define in practice and this low-dimensional data may be extended to give a full homotopy coherent adjunction:

\begin{thm}[{\cite[4.3.11,4.3.13]{RV-II}}]\label{thm:coh-adj} Any adjunction in a quasi-categorically enriched category extends to a homotopy coherent adjunction.
\end{thm}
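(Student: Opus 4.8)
The plan is to construct the simplicial functor $\Adj \to \KK$ by an inductive extension over the cells of $\Adj$, exploiting the fact---established in the squiggle proposition just above---that $\Adj$ is a simplicial computad. Since a simplicial functor out of a simplicial computad is freely and uniquely determined by its values on non-degenerate atomic arrows, subject only to compatibility with the face and degeneracy operators, it suffices to specify the image of each atomic $n$-arrow, one dimension at a time, so that the assignment respects the simplicial identities. The relevant bookkeeping is governed entirely by the strictly undulating squiggle calculus: the faces of an atomic $n$-arrow are obtained by deleting one of its $n+1$ lines, so the ``boundary'' of an as-yet-undefined atomic arrow is assembled from images already chosen for the lower-dimensional arrows it is built upon.

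First I would fix the base data. The objects $+$ and $-$ map to $B$ and $A$; the atomic $0$-arrows $f$ and $u$ map to $F \colon B \to A$ and $U \colon A \to B$; the atomic $1$-arrows map to the given unit $\eta$ and counit $\epsilon$; and for the two triangle-identity witnesses I would use $2$-arrows whose boundaries have the form \eqref{eq:triangles}, which exist precisely because we begin from an adjunction in the homotopy $2$-category of $\KK$. These choices pin down the functor on the subcomputad generated by the low-dimensional adjunction data, and they are genuinely extra input: unlike the higher coherences below, the triangle $2$-cells cannot be manufactured from $\eta$ and $\epsilon$ alone.

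Next I would run the inductive step. Suppose the functor is defined on a subcomputad containing all atomic arrows of dimension below the current stage. Each remaining non-degenerate atomic $n$-arrow $w$ has all of its faces already assigned, and these images assemble into a map into the hom-quasi-category $\Fun(P,Q)$, where $P,Q \in \{A,B\}$ are the images of the source and target of $w$. Choosing an image for $w$ means extending this boundary data to an $n$-simplex. The essential point---and the step I expect to be the main obstacle---is to \emph{organize the order in which the atomic arrows are attached so that each such extension is an \textbf{inner} horn filling} $\Lambda^n_k \hookrightarrow \Delta^n$ with $0 < k < n$. Because $\KK$ is only quasi-categorically enriched (its hom-objects fill inner horns but not arbitrary spheres or outer horns), this is exactly the class of filling problems we are allowed to solve; granting it, the filler exists and the functor extends over the new cell.

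The heart of the matter is therefore the combinatorial claim that such an inner-horn filtration of $\Adj$ exists: using the undulating-squiggle description of the atomic arrows and the rule that face maps delete lines, one must show that, once the unit, counit, and two triangle cells are in place, every higher atomic coherence can be attached by filling a horn that omits a single non-extremal face. This is where the specific geometry of $\Adj$ is indispensable---for a generic simplicial computad mapping into a quasi-categorically enriched category no such extension need exist, since one would be forced to fill complete boundaries. Granting the filtration, the partial functors assemble compatibly across all dimensions, and their union is the desired simplicial functor $\Adj \to \KK$ extending the given data, i.e.\ a homotopy coherent adjunction. I would expect the same inner-horn analysis, applied relatively, to show moreover that the space of such extensions is contractible, yielding the essential uniqueness recorded in the cited references.
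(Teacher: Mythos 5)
Your overall architecture---exploit the simplicial computad/squiggle presentation of $\Adj$, fix the low-dimensional data, and extend inductively by solving horn-filling problems in the hom quasi-categories---is indeed the strategy of the proof this paper cites: the paper gives no argument of its own, deferring entirely to \cite{RV-II}, where this plan is executed via the notions of \emph{parental subcomputad} and \emph{fillable arrow}. But as written your inductive step contains a contradiction that is fatal to the literal argument. You say each remaining atomic $n$-arrow $w$ ``has all of its faces already assigned,'' and then propose to attach it by an inner horn filling. If all $n+1$ faces are already assigned, the extension problem is $\partial\Delta^n \hookrightarrow \Delta^n$---a sphere filling---which a quasi-category does not solve; a horn $\Lambda^n_k \hookrightarrow \Delta^n$ only helps when the $k$-th face is \emph{not} yet determined. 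The missing idea is that atomic arrows must be attached in \emph{pairs}: a fillable atomic $n$-arrow $w$ together with a distinguished codimension-one face $w\cdot\delta^k$ that is itself an atomic $(n-1)$-arrow not previously attached, so that a single inner-horn filler simultaneously defines both. In particular the induction cannot proceed ``one dimension at a time'' with ``all atomic arrows of dimension below the current stage'' in hand; the filtration of $\Adj$ must interleave dimensions, and its stages must be closed under the relevant simplicial operators (this is what ``parental'' encodes in \cite{RV-II}). Verifying that such a filtration exists, starting from the chosen generators, is the real content of the theorem, and your proposal explicitly leaves it as a black box.

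Your base step also over-specifies. Fixing \emph{both} triangle-identity witnesses arbitrarily at the outset is not harmless extra input: in a homotopy coherent adjunction the two witnesses are linked by a swallowtail-type compatibility realized by higher cells, and an arbitrary pair of 2-arrows with boundaries as in \eqref{eq:triangles} need not satisfy it, hence need not jointly extend---nothing in your inductive machine can keep both prescribed witnesses, since the filtration will generate one of them by horn filling rather than accept it as free data. This is why \cite{RV-II} extends from more parsimonious input (e.g.\ the unit alone, or unit and counit with at most one triangle witness), and why the homotopical uniqueness of \cite[\S 4.4]{RV-II}---which this paper records with the deliberately cautious phrase ``judiciously chosen basic adjunction data''---is asserted only for such choices; your closing expectation that the space of extensions of your base data is contractible would fail for the both-witness data. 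With the base data pared down and the fillable-pair filtration supplied, your sketch becomes the cited proof.
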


Moreover extensions from judiciously chosen basic adjunction data are homotopically unique \cite[\S 4.4]{RV-II}.

\begin{rmk} It is also fruitful to consider simplicial functors $\Adj \to \eS$ valued in a Kan complex enriched category. Because all 1-arrows in $\eS$ are invertible, the unit and counit in this case are natural isomorphisms and this data is more properly referred to as a \textbf{homotopy coherent adjoint equivalence}. Theorem \ref{thm:coh-adj} implies that any adjoint equivalence in a Kan complex enriched category extends to a homotopy coherent adjoint equivalence. Paired with the familiar 2-categorical result that says that any equivalence can be promoted to an adjoint equivalence, we conclude that any equivalence in a Kan complex enriched category extends to a homotopy coherent adjoint equivalence.
\end{rmk}

\subsection{Algebras for a homotopy coherent monad}\label{sec:algebras}

Finally, we connect these homotopy coherent notions to ``algebra.''

\begin{defn}\label{defn:algebra} Let $(T,\eta,\mu)$ be a monad acting on a category $\cB$. A $T$-\textbf{algebra} is a pair $(b, \beta \colon Tb\to b)$ so that 
\[
\begin{tikzcd}
b \arrow[r, shift left=0.25em, "\eta"] & Tb \arrow[l, shift left=0.25em, "\beta"] \arrow[r, shift right=0.5em, "T\eta" description] \arrow[r, shift left=1.25em, "\eta T"] & T^2b \arrow[l, shift right=0.5em, "\mu" description] \arrow[l, shift left=1.25em, "T\beta"]
\end{tikzcd}
\]
defines a truncated split augmented simplicial object.\footnote{The shape of this diagram is given by the full subcategory of $\DDelta_\top$ spanned by the objects $[0]$, $[1]$, and $[2]$.}
\end{defn}

$T$-algebras in $\cB$ and $T$-algebra homomorphisms define the \textbf{category of algebras}, traditionally denoted by $\cB^T$.

\begin{prop} Let $(T,\eta,\mu)$ be a monad acting on a category $\cB$. There is an adjunction
\[
\begin{tikzcd}
\cB \arrow[r, bend left, "F^T" pos=0.6] \arrow[r, phantom, "\perp"] & \cB^T \arrow[l, bend left, "U^T" pos=0.4]
\end{tikzcd}
\]
whose underlying monad is $T$.
\end{prop}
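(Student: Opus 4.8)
The plan is to exhibit the Eilenberg--Moore adjunction explicitly, constructing the free and forgetful functors and then reducing every verification to one of the monad or algebra axioms. First I would define the forgetful functor $U^T \colon \cB^T \to \cB$ by $(b,\beta) \mapsto b$ on objects, sending each algebra homomorphism to its underlying map in $\cB$; functoriality is immediate. The free functor $F^T \colon \cB \to \cB^T$ should send an object $b$ to the \emph{free algebra} $(Tb, \mu_b \colon T^2b \to Tb)$ and a morphism $g \colon b \to b'$ to $Tg$. The first thing to check is that $(Tb, \mu_b)$ genuinely is a $T$-algebra: the associativity axiom for $\mu$ gives $\mu_b \circ T\mu_b = \mu_b \circ \mu_{Tb}$ and the unit axiom gives $\mu_b \circ \eta_{Tb} = \id_{Tb}$, and these are precisely the two identities encoded by the truncated split augmented simplicial object in the definition of an algebra. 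Naturality of $\mu$ then shows $Tg$ is an algebra homomorphism, so $F^T$ is well-defined.

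Next I would produce the unit and counit of Definition \ref{defn:adjunction}. The unit $\eta \colon \id_\cB \to U^T F^T = T$ is nothing but the monad unit, already natural by hypothesis. The counit $\epsilon \colon F^T U^T \to \id_{\cB^T}$ should have component at an algebra $(b,\beta)$ given by the structure map itself, $\epsilon_{(b,\beta)} := \beta \colon (Tb,\mu_b) \to (b,\beta)$. This is the one genuine point to verify, and it is also where I expect the only conceptual (as opposed to bookkeeping) content to lie: that $\beta$ is an algebra homomorphism is \emph{exactly} the associativity axiom $\beta \circ \mu_b = \beta \circ T\beta$ of $(b,\beta)$, and naturality of $\epsilon$ unwinds to the statement that every algebra homomorphism $h \colon (b,\beta) \to (b',\beta')$ satisfies $h \circ \beta = \beta' \circ Th$, which is the defining condition of being an algebra homomorphism. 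Thus both requirements on the counit are handed to us by the algebra definitions rather than requiring fresh work.

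Finally I would check the two triangle identities. Evaluated at $b \in \cB$, the identity $\epsilon F^T \circ F^T\eta = \id_{F^T}$ has underlying map $\mu_b \circ T\eta_b = \id_{Tb}$, the right unit axiom of the monad; evaluated at $(b,\beta)$, the identity $U^T\epsilon \circ \eta U^T = \id_{U^T}$ reads $\beta \circ \eta_b = \id_b$, the unit axiom of the algebra $(b,\beta)$. This establishes the adjunction $F^T \dashv U^T$. To conclude that its underlying monad is the original one, I invoke the earlier lemma computing the monad $(U^T F^T, \eta, U^T\epsilon F^T)$ induced by an adjunction: its endofunctor $U^T F^T$ is $T$ by construction, its unit is $\eta$, and its multiplication has component $U^T(\epsilon_{F^T b}) = U^T(\mu_b) = \mu_b$, so the induced monad is exactly $(T,\eta,\mu)$. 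The step I expect to require the most care is recognizing that well-definedness and naturality of the counit are not extra hypotheses but are \emph{equivalent} to the algebra associativity and homomorphism axioms, so that the whole construction is forced and no separate checks are needed.
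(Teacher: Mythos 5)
Your proposal is correct and complete: it is the standard Eilenberg--Moore construction, with each verification correctly reduced to the matching axiom --- $(Tb,\mu_b)$ being an algebra to the monad associativity and unit laws, the counit's well-definedness and naturality to the algebra associativity and homomorphism conditions, the triangle identities to $\mu \circ T\eta = \id$ and $\beta \circ \eta_b = \id_b$, and $U^T\epsilon F^T = \mu$ for the induced monad. The paper states this proposition without proof, treating it as classical, so there is no alternative argument to compare against; yours is exactly the expected one, and your closing observation that the counit checks are forced by the algebra axioms rather than additional hypotheses is accurate.
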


If $B \in \KK$ is thought of as an $\infty$-category and $\Mnd \to \KK$ is a homotopy coherent monad on $B$, then the $\infty$-category of $T$-algebras in $B$ may be recovered as an appropriately defined \emph{flexible weighted limit} of the diagram $\Mnd \to \KK$. This limit computes the value of the  right Kan extension along $\Mnd\hookrightarrow\Adj$ at the object $-$ and so in fact constructs the entire \emph{monadic}  homotopy coherent adjunction. All of the examples of quasi-categorically enriched categories $\KK$ mentioned above are $\infty$-\emph{cosmoi}, in which such limits exist. 

A full description of the $\infty$-category of algebras for a homotopy coherent monad is given in \cite[\S 6]{RV-II}, but we can at least give an informal description here. To a rough approximation, a \emph{homotopy coherent $T$-algebra} for a homotopy coherent monad acting on an object $B \in \KK$ is a homotopy coherent diagram of shape $\DDelta_\top$ in $B$ satisfying various functoriality conditions, which are suggested by the picture
\[
\begin{tikzcd}
b \arrow[r, shift left=0.25em, "\eta"] & Tb \arrow[l, shift left=0.25em, "\beta"] \arrow[r, shift right=0.5em, "T\eta" description] \arrow[r, shift left=1.25em, "\eta T"] & T^2b \arrow[l, shift right=0.5em, "\mu" description] \arrow[l, shift left=1.25em, "T\beta"] \arrow[r, shift left=0.25em] \arrow[r, shift right=.75em] \arrow[r, shift left = 1.25em] & T^3b \arrow[l, shift left=0.25em] \arrow[l, shift right=0.75em] \arrow[l, shift left=1.25em] & \cdots 
\end{tikzcd}
\]

\subsection{Other vistas}

Homotopy coherent adjunctions, monads, and algebras represent a rather small part of ``higher algebra,'' which includes strong shape theory \cite{PS}, Waldhausen's notion of ``brave new rings'' \cite{vogt2}, the $\infty$-operads of Cisinski-Moerdijk-Weiss \cite{CM,MW} and Lurie \cite{lurie}, among other topics. A wonderful survey can be found in \cite{Gepner}. 

\refs

\bibitem[Boardman--Vogt, 1973]{BV} J.~M.~Boardman, R.~M.~Vogt, Homotopy invariant algebraic structures on topological spaces, {\em Lecture Notes in Mathematics} {\bf 347}, 1973.

\bibitem[Cisinski--Moerdijk, 2011]{CM} D.-C.~Cisinski, I.~Moerdijk, Dendroidal sets as models for homotopy operads, {\em J.~Topology}, {\bf 4(2)} (2011) 257--299.

\bibitem[Cordier--Porter, 1986]{CP-vogt} J.~M.~Cordier, T.~Porter, Vogt's theorem on categories of homotopy coherent diagrams, Math.~Proc.~Camb.~Phil.~Soc. {\bf 100} (1986) 65--90.

\bibitem[Cordier--Porter, 1988]{CP-maps} J.~M.~Cordier, T.~Porter, Maps between homotopy coherent diagrams, Topology and its Applications {\bf 28} (1988) 255--275.

\bibitem[Dugger--Spivak, 2011]{dugger-spivak} D. Dugger and D.~I.~Spivak, Rigidification of quasi-categories, {\em Algebr. Geom. Topol.,} 11(1):225--261, 2011.

\bibitem[Dwyer--Kan, 1980]{DK-simplicial} W.~G.~Dwyer, D.~M.~Kan, Simplicial localizations of categories, J. Pure Appl.~Algebra {\bf 17} (1980), 267--284.

\bibitem[Dwyer--Kan--Smith, 1989]{DKS-homotopy} W.~G.~Dwyer, D.~M.~Kan, J.~H.~Smith, Homotopy commutative diagrams and their realizations, Journal of Pure and Applied Algebra {\bf 57} (1989) 5--24.

\bibitem[Gabriel--Zismann, 1967]{GZ} P. Gabriel and M. Zisman, Calculus of fractions and homotopy theory, Ergebnisse der Mathematik und ihrer Grenzgebiete, Band 35. Springer-Verlag New York, Inc., New York, 1967.

\bibitem[Gepner, 2019]{Gepner} D.~Gepner, An introduction to higher categorical algebra, in {\em Handbook of Homotopy Theory}, Haynes Miller ed., Chapman and Hall/CRC, 2019, 487--548.

\bibitem[Joyal--Tierney, 2007]{JT} A. Joyal and M.~Tierney, Quasi-categories vs Segal spaces. {\em Categories in algebra, geometry and
mathematical physics} Contemp. Math.~{\bf 431} (2007), 277--326. 

\bibitem[Lurie, 2017]{lurie} J.~Lurie, \emph{Higher Algebra}, 2017, \href{https://www.math.ias.edu/~lurie/papers/HA.pdf}{www.math.ias.edu/$\sim$lurie/papers/HA.pdf}

\bibitem[Meyer, 1984]{meyer} J.-P.~Meyer. Bar and cobar constructions. I. {\em J. Pure Appl. Algebra}, 33(2):163--207, 1984.

\bibitem[Moerdijk--Weiss, 2007]{MW} I.~Moerdijk, I.~Weiss, Dendroidal sets, {\em Algebraic \& Geometric Topology}, {\bf 7}, (2007), 1441--1470.

\bibitem[Porter--Stasheff, 2022]{PS} T.~Porter, J.~Stasheff,  Homotopy coherent representations, 2022, \href{https://arxiv.org/abs/2202.05322}{arXiv:2202.05322}

\bibitem[Quillen, 1967]{quillen} D.~G.~Quillen, \emph{Homotopical algebra}, Lecture Notes in Math.~{\bf 43}, Springer, 1967.

\bibitem[Riehl, 2011]{riehl-leisurely} E.~Riehl, A leisurely introduction to simplicial sets, preprint available from \href{https://emilyriehl.github.io/files/ssets.pdf}{emilyriehl.github.io/files/ssets.pdf}

\bibitem[Riehl, 2011]{riehl-necklace} E.~Riehl, On the structure of simplicial categories associated to quasi-categories, {\em Math.~Proc.~Camb.~Phil.~Soc.} {\bf 150} (2011), no.3., 489--504. \href{http://arxiv.org/abs/0912.4809}{arXiv:0912.4809}

\bibitem[Riehl, 2014]{riehl-cathtpy} E.~Riehl, \emph{Categorical homotopy theory}, New Mathematical Monographs~24, Cambridge University Press, 2014.

\bibitem[Riehl--Verity, 2016]{RV-II} E.~Riehl and D.~Verity, Homotopy coherent adjunctions and the formal theory of monads, {\em Adv.~Math}~{\bf 286} (2016), 802--888.   \href{http://arxiv.org/abs/1310.8279}{arXiv:1310.8279}

\bibitem[Riehl--Verity, 2018]{RV-VI} E.~Riehl and D.~Verity, The comprehension construction,{\em Higher Structures} {\bf 2} (2018), no.~1, 116-190,  \href{http://arxiv.org/abs/1706.10023}{arXiv:1706.10023}

\bibitem[Riehl--Verity, 2020]{riehl-scratch}  E.~Riehl and D.~Verity, {$\infty$-category theory from scratch}, {\em Higher Structures} 4(1):115--167, 2020,  \href{http://arxiv.org/abs/1608.05314}{arXiv:1608.05314}

\bibitem[Riehl--Verity, 2021]{RV-IX} E.~Riehl and D.~Verity,  Cartesian exponentiation and monadicity, 2021, \href{https://arxiv.org/abs/2101.09853}{arXiv:2101.09853}

\bibitem[Riehl--Verity, 2022]{RV-elements} E.~Riehl and D.~Verity, {\em Elements of $\infty$-Category Theory}, Cambridge Studies in Advanced Mathematics 194, Cambridge University Press, 2022.

\bibitem[Schanuel--Street, 1986]{SS}S.~Schanuel and R.H.~Street, The free adjunction {\em Cah. Topol. G\'{e}om. Diff\'{e}r. Cat\'{e}g.}~{\bf 27} (1986), 81--83.

\bibitem[Vogt, 1973]{vogt1} R.M.~Vogt, Homotopy limits and colimits, Math.~Z. {\bf 143} 11--52 (1973).

\bibitem[Vogt, 1999]{vogt2} R.M.~Vogt, Introduction to algebra over ``brave new rings''. Proceedings of the 18th Winter School ``Geometry and Physics.'' Palermo: Circolo Matematico di Palermo, 1999. 49--82. 

\endrefs

\end{document}